\newcommand{\Id}{\mathrm{Id}}
\newcommand{\tr}{\mathrm{tr}\,}
\def\N{\mathbb{N}}
\def\R{\mathbb{R}}
\def\bsa{{\boldsymbol a}}
\def\bsb{{\boldsymbol b}}
\def\bse{{\boldsymbol e}}
\def\bsw{{\boldsymbol w}}
\def\bsz{{\boldsymbol z}}
\def\bsp{{\boldsymbol p}}
\def\bsq{{\boldsymbol q}}
\def\bsv{{\boldsymbol v}}
\def\bsalpha{{\boldsymbol \alpha}}
\def\bsbeta{{\boldsymbol \beta}}
\def\bseta{{\boldsymbol \eta}}
\def\bsxi{{\boldsymbol \xi}}
\def\dd{\,\mathrm{d}}
\def\tr{\mathrm{tr}\,}
\def\({\left(}
\def\){\right)}
\def\A{F}
\def\di{2}
\newtheorem{theorem}{Theorem}
\newtheorem{definition}[theorem]{Definition}
\newtheorem{lemma}[theorem]{Lemma}
\newtheorem{proposition}[theorem]{Proposition}
\newtheorem{remark}[theorem]{Remark}
\title{Rescaled Objective Solutions of Fokker-Planck and Boltzmann equations}
\author{Karsten Matthies}
\address{Department of Mathematical Sciences, University of Bath,
  Bath BA2 7AY, United Kingdom}
\email{k.matthies@bath.ac.uk}
\author{Florian Theil}
\address{Mathematics Institute, University of Warwick, Coventry CV4 7AL, United Kingdom} \email{f.theil@warwick.ac.uk}
\begin{document}
\begin{abstract}
We study the long-time behavior of symmetric solutions of the nonlinear Boltzmann equation and a closely related nonlinear Fokker-Planck equation. If the symmetry of the solutions corresponds to shear flows, the existence of stationary solutions can be ruled out because the energy is not conserved. After anisotropic rescaling both equations conserve the energy. We show that the rescaled Boltzmann equation does not admit stationary densities of Maxwellian type (exponentially decaying). For the rescaled Fokker-Planck equation we demonstrate that all solutions converge to a Maxwellian in the long-time limit, however the convergence rate is only algebraic, not exponential.
\end{abstract}
\maketitle
\section{Introduction}

Symmetric solutions play a very important role in materials sciences. The reason is that the
fundamental laws of physics exhibit many symmetries such as translation and rotation invariance, those symmetries lead to the existence of time-dependent solutions that are invariant under the action of a symmetry group.

The term \lq objective solution' has been coined    by Dumitric{\u a} and James    in \cite{DJ07} for the case where the symmetry group is a subgroup of the Euclidean symmetry group    motivated by molecular dynamics simulations and other engineering applications.     We will study objective solutions in the case where the symmetries consist of translations. For the purpose of this paper we say that    for a given matrix $S \in \R^{m \times n}$
a function $f:\R^n\to \R$ is $S$-objective if $f(\bsxi + \bseta)= f(\bsxi)$ for all $\bseta \in \mathrm{ker} S$, or equivalently requirement $f(\bsxi) = g(S \, \bsxi)$ for some    $g:\R^m\to \R$.    We will be mostly interested in the kinetic setting where $\bsxi = (\bsz, \bsw)$, $\bsz$ being the position and $\bsw$ the velocity. It is important to realize that translation invariance implies that the configuration space is unbounded, therefore extensive thermodynamic quantities such as energy are automatically
infinite. Moreover as we are dealing with open systems, it is not necessarily the case that local energy densities are conserved even if the equation of motion are conservative.

The properties of the symmetric solutions depend strongly on the choice of $S$,    we analyse here one interesting $S$ which leads to a non-conservative system, but ideas will be also relevant for other $S$.      If $n=2d$, $\Id \in \R^{d\times d}$ is the identity matrix and
$S = (\Id,0)\in \R^{d \times 2d}$ one obtains solutions that are independent of $\bsxi$ and the choice $S = (\Id,\pm \Id)$ yields expanding and contracting flows where $\bsw = \mp \bsz$.
We will study Couette flows/shear flows where
$$
  S = (-\mu\,\bsalpha\otimes\bsbeta,\,\Id),
$$
with $\mu \in \R$ being the shear parameter, $\bsalpha,\bsbeta \in \R^d$ being orthonormal. To see that $S$ corresponds to shear-flows observe that
$$ \mathrm{ker}(S) = \mathrm{span}\{(\bsalpha,0),(\bsbeta,\,\mu\,\bsalpha)\}$$
so that $$f(\bsz+x\,\bsalpha +y\,\bsbeta,\bsw + \mu\,y\,\bsalpha)= f(\bsz,\bsw).$$
One of the key obstacles to studying the long-time behaviour is the fact that stationary solutions do not exist as the energy density of symmetric solutions increases with time. A popular approach to overcome the problem of energy growth is to consider rescaled objective solutions \cite{GS03, DJ10, DJ12, DJ07} and in particular \cite{JNV17}. We revisit the concept of rescaled objective solutions for the Boltzmann equation and a Fokker-Planck equation with similar properties.    In contrast to much of the earlier work,    our results are based on the notion of anisotropically rescaled solutions,    the non-autonomous anisotropic coordinate change will fix the second moment tensor.
We analyze the corresponding rescaled    - now non-autonomous --     equations and obtain the following results for the
nonlinear Fokker-Planck equation (A) and the Boltzmann equation with hard sphere collisions (B).
\begin{itemize}
  \item[A)] Characterization of stationary solutions and sharp estimates of the convergence rate (Theorem~\ref{FP-thm}). The convergence rate is algebraic.
\item[B)] Characterization of the collision invariants and a rigorous proof that stationary solutions are not Maxwellian (Theorem~\ref{Boltz-thm}).
\end{itemize}
The main difference between the nonlinear Fokker-Planck equation and the Boltzmann equation
is that the former has a purely local dissipation term whereas the Boltzmann equation involves a nonlocal
and nonlinear collision operator. As a result we can obtain much more detailed information
about the long-term behaviour of rescaled objective solutions of the Fokker-Planck equation than the Boltzmann equation. In the conservative case it is well known that the Maxwellian is the unique stationary solution of the Fokker-Planck equation and the Boltzmann equation. Moreover solutions of the linear Fokker-Planck equation and the nonlinear, homogeneous Boltzmann equation converge to the equilibrium at an exponential rate, cf. \cite{DV06} and \cite{M06}. For the inhomogeneous Boltzmann equation the problem of establishing
exponential convergence to the equilibrium is closely linked to Cercignani's conjecture, an overview can be found in \cite{DMV11}.

The behaviour of the rescaled objective solutions is quite different.
In the case of the Fokker-Planck equation the equilibrium after the anisotropic scaling is still a Maxwellian, but the rate of convergence is only algebraic.
While it is not known whether the rescaled Boltzmann equation for hard spheres admits stationary solutions
our results imply that even if one exist it is not of exponential type. In particular, Maxwellians are not equilibria. We point out that existence of renormalized stationary solutions of the Boltzmann equation with Maxwellian interaction has been established in \cite{JNV17}.

The main method to analyse the long-term behaviour of the Fokker-Planck equation is an adaption of hypocoercivity in a non-autonomous setting. Convergence to equilibria in degenerate dissipative equations preserving mass
has attracted major interest  starting with the use of logarithmic Sobolev inequalities, entropies  and other tools functional analytic tools \cite{OV00,MV00}. These methods could be applied
to Fokker-Planck equations \cite{AMTU01,CJMTU01} as well as some Boltzmann equations \cite{CCG03,DV05}. A general abstract approach for evolution equations consisting a (possibly)
 degenerate dissipative part and some conservative part was introduced by Villani with his concept of hypocoercivity \cite{VillHyp}, see also \cite{DMS15}. This method has  successfully been adapted in many contexts
 like  a linear operator in some vorticity formulations \cite{GGN}, a wide class of dissipative kinetic equations \cite{Dua11}, a generalized Langevin
equation \cite{OP11} and the  meta-stability of bar states in Navier-Stokes equations \cite{BW13}.
Recent extensions of the theory include \cite{DMS15} for classes of  linear kinetic equations, \cite{MMArma15} for kinetic Fokker-Planck equations, and \cite{Bau17} for a modified general approach using
a generalised Bakry-\'{E}mery calculus.

Our methodological contribution is an adaption to non-autonomous nonlinear equations by combining the abstract hypocoercivity result for a limiting problem in a Duhamel formula with a priori estimates for higher derivatives of the full equation. These a priori estimates are indeed obtained using a calculus inspired by hypocoercivity.     A crucial ingredient is the detailed asymptotic analysis  of the anisotropic rescaling, which can be obtained from closed ordinary differential equations for the second moments of the rescaled Fokker-Planck solutions. Indeed, higher order moment equations are used to derive lower algebraic estimates in the convergence rate for typical initial data.  The lack of detailed knowledge about the second moments implies that we have a less explicit control of the  anisotropic rescaling in case of objective solutions to the Boltzmann equation, such that the characterization of a limit distribution and their convergence rates is beyond the scope of this paper.

The rest of the paper is organised as follows. In Section 2 we collect some fundamental properties of objective functions. The results for Fokker-Planck situation are given and proved  in Section 3. The corresponding analysis for the Boltzmann equation is in Section 4. We give a short summary and conclusion in Section 5. The proofs of some technical results not relevant for the main argument are postponed to the Appendix.

\section{Objective functions}
\begin{definition}
Let $S\in \R^{l \times n}$ a matrix. A function $f\in L^1_\mathrm{loc}(\R^n)$ is called $S$-objective
if $f(\bsxi + \bseta)= f(\bsxi)$ for all $\bseta \in \mathrm{ker} S$.
\end{definition}
A classical result for functions which are invariant under the action of a symmetry group is the
Hilbert-Weyl theorem which states that the ring of invariant polynomials has a basis, cf. e.g. \cite{GSS88}. We require a closely related result for measurable functions.
\begin{proposition} \label{HW}
   Let $S\in \R^{l \times n}$ a matrix.    Let $f\in L^1_\mathrm{loc}(\R^n)$ be a measurable function.
The following are equivalent:
\begin{enumerate}
\item $f$ is $S$-objective.
\item $\nabla\cdot (f T)=0$ if $T \in \R^{n \times l}$ has the property that $\mathrm{range} \,T = \mathrm{ker}S$.
\item There exists a measurable function $g:\mathrm{range}(S) \to \R$ such that $f(\bsxi) = g(S\bsxi)$.
\end{enumerate}
\end{proposition}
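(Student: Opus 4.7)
The plan is to establish the equivalences by the cyclic chain $(3)\Rightarrow(2)\Rightarrow(1)\Rightarrow(3)$, so that each implication starts from a workable hypothesis: a closed form for $f$ in the first step, a distributional PDE in the second, and honest translation invariance in the third.

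The implication $(3)\Rightarrow(2)$ I would treat as a chain rule computation interpreted in the sense of distributions. Writing $f=g\circ S$ and letting $T_j$ be any column of $T$, one has $T_j\in\mathrm{range}\,T=\mathrm{ker}\,S$, so $ST_j=0$; translating a test function $\varphi$ by $\varepsilon T_j$ therefore leaves $g(S\cdot)$ unchanged, and differentiating at $\varepsilon=0$ yields $T_j\cdot\nabla f=0$ distributionally. For $(2)\Rightarrow(1)$, I would choose $T$ so that its columns form a basis $\bse_1,\dots,\bse_l$ of $\mathrm{ker}\,S$ (taking $l=\dim\,\mathrm{ker}\,S$); condition (2) then gives vanishing of each distributional directional derivative $\bse_j\cdot\nabla f$. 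The standard fact that an $L^1_\mathrm{loc}$ function with zero distributional derivative in a direction admits a representative invariant under translations in that direction, applied along each $\bse_j$ after a Fubini slicing, then produces (1).

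The heart of the argument, and the principal obstacle, is $(1)\Rightarrow(3)$, because one must produce a \emph{measurable} quotient $g$ on $\mathrm{range}(S)$ from an $f$ that is only $L^1_\mathrm{loc}$. The geometric idea is to use the orthogonal decomposition $\R^n=(\mathrm{ker}\,S)^\perp\oplus\mathrm{ker}\,S$; the restriction $S_0:=S|_{(\mathrm{ker}\,S)^\perp}$ is a linear isomorphism onto $\mathrm{range}(S)$, which suggests the definition $g(\bsy):=f(S_0^{-1}\bsy)$. The subtlety is that (1) only provides $f(\cdot+\bseta)=f(\cdot)$ almost everywhere for each fixed $\bseta$, and $(\mathrm{ker}\,S)^\perp$ has Lebesgue measure zero in $\R^n$, so $f$ cannot be evaluated on it directly. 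I would circumvent this by mollification: the convolution $f_\varepsilon$ with a smooth bump is continuous and inherits $S$-objectivity pointwise, so the construction applies verbatim and delivers $f_\varepsilon=g_\varepsilon\circ S$ with $g_\varepsilon$ continuous. Sending $\varepsilon\to 0$ in $L^1_\mathrm{loc}$ and passing to an almost everywhere convergent subsequence identifies a measurable limit $g$ with $f=g\circ S$ almost everywhere. An equivalent route uses Fubini directly: on $(\mathrm{ker}\,S)^\perp\times\mathrm{ker}\,S$, translation invariance along a countable dense subset of $\mathrm{ker}\,S$ forces the slice $\bseta\mapsto f(\bsxi_2+\bseta)$ to be essentially constant for almost every $\bsxi_2$, and $g$ is then defined by that constant value.
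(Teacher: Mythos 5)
Your proposal is correct, and it follows a \emph{cyclic} chain $(3)\Rightarrow(2)\Rightarrow(1)\Rightarrow(3)$, whereas the paper establishes the four implications $(1)\Leftrightarrow(2)$ and $(1)\Leftrightarrow(3)$ separately. The content of $(3)\Rightarrow(2)$ in your version is essentially the paper's $(1)\Rightarrow(2)$: a difference-quotient argument on test functions, which works at the $L^1_\mathrm{loc}$ level. The genuinely different steps are the other two. For $(2)\Rightarrow(1)$ the paper writes $f(\bsxi+\bseta)-f(\bsxi)=\int_0^1 \nabla f(\bsxi+s\bseta)\cdot\bseta\,\dd s$, which tacitly assumes $f$ is $C^1$ or at least absolutely continuous along lines; your version instead invokes the standard lemma that an $L^1_\mathrm{loc}$ function whose distributional directional derivative vanishes has a representative constant along those lines, which is the rigorous form of the same idea for mere measurable $f$. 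For $(1)\Rightarrow(3)$ the paper simply sets $g(\bsxi)=f(\bar S^{-1}\bsxi)$ with $\bar S=S|_{\mathrm{range}(S^*)}$; as you correctly observe, this evaluates $f$ on the measure-zero subspace $\mathrm{range}(S^*)=(\ker S)^\perp$ and so is only literally meaningful if one interprets $S$-objectivity as a property of a specific pointwise representative, which is implicitly what the paper does. Your mollification route (or equivalently the Fubini slicing you mention) removes this assumption and produces a measurable $g$ directly from the equivalence-class formulation. In short: same geometric backbone (decompose $\R^n=\ker S\oplus(\ker S)^\perp$ and restrict), but your version is formulated to respect the stated hypothesis that $f$ is only $L^1_\mathrm{loc}$, while the paper's is cleaner under an implicit smoothness or pointwise-representative assumption.
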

The proof is standard, we include it for the convenience of the reader.
\begin{proof}
(1) implies (2):\\
It suffices to show that $\int f\, \nabla \varphi\cdot \bseta \dd \bsxi=0$ for each $\bseta \in \mathrm{ker} S$ and each smooth and compactly supported testfunction $\varphi$.
As $f$ is $S$-objective one finds that
$$0 =\lim_{h\to 0} \frac{1}{h}\int (f(\bsxi+h \bseta)-f(\bsxi))\varphi(\bsxi)\, \dd \bsxi = \lim_{h\to 0} \frac{1}{h}\int (\varphi(\bsxi -h \bseta)-\varphi(\bsxi))\, f(\bsxi)\, \dd \bsxi = - \int (\nabla \varphi \cdot \bseta)\,f\, \dd \xi,$$ which is the claim.\\[1em]
(2) implies (1):\\
As $\mathrm{range} \,T = \mathrm{ker}S$ there exists $\bsa \in
\R^{l}$ such that $\bseta = T\bsa$.
Then
\begin{align*}
f(\bsxi+\eta) - f(\bsxi) = \int_0^1 \frac{\dd}{\dd s} f(\bsxi+s \eta)\, \dd s = \int_0^1 \nabla f(\bsxi+s \eta) \cdot \eta\,\dd s
 = \int_0^1 \nabla \cdot( f(\bsxi+s \eta) T) \bsa\,\dd s=0.
\end{align*}\\[0.1em]
(1) implies (3):\\
Define the operator $\bar S: \mathrm{range}(S^*) \to \mathrm{range}(S)$ by $\bar S= S|_{\mathrm{range}(S^*)}$. Observe that $\bar S$ is invertible and define $g(\bsxi) = f(\bar S^{-1}\bsxi)$.\\[1em]
(3) implies (1):\\
If $\bseta \in \mathrm{ker} S$, then
$$ f(\bsxi+\bseta)= g(S(\bsxi+\bseta))=g(S\bsxi)= f(\bsxi).$$
\end{proof}
We are interested in a shear flow setting where $\bsalpha, \bsbeta \in \R^d$ are orthonormal
vectors, $n=2d$ and $$S = (-\mu\,\bsalpha\otimes\bsbeta,\,\Id) \in \R^{d\times 2 d}.$$

As $\ker S = \mathrm{span}\{(\bsalpha,0), (\bsbeta,\mu \bsalpha)\}$ any $S$-objective function $f$ satisfies
\begin{equation}
\label{shear}
f(\bsz, \bsw) = f(\bsz +x\,\bsalpha +y\, \bsbeta, \bsw +\mu\,y\,\bsalpha ) \text{ for all }
x,y \in \R.
\end{equation}
Moreover, by Proposition~\ref{HW} part (2)
\begin{eqnarray*}
\nabla_{\bsz} f\cdot \bsalpha &=&0,\\
\nabla_{\bsz} f\cdot \bsbeta+ \mu \nabla_{\bsw} f \cdot \bsalpha&=&0,
\end{eqnarray*}
or equivalently
\begin{equation} \label{symrel}
\nabla_{\bsz} f =- \mu\,(\nabla_{\bsw} f\cdot \bsalpha)\,\bsbeta.
\end{equation}
Our results are based on the observation that the representation of objective functions as in Proposition \ref{HW} is not unique because $S$ is not fully determined by the null space. A careful choice of the representation
can lead to interesting results.
\begin{definition}
   Let $S\in \R^{n \times d }$  be a matrix.       A function $f$ is {\em rescaled} $S$-objective  if it admits the representation
\begin{equation} \label{sym}
f(\bsxi) =\det \eta \; G(\eta \, S\, \bsxi) 
\end{equation}
for some density $G$,
where $\eta \in \R^{d\times d}_\mathrm{sym}$.
\end{definition}
In the shear flow setting one obtains the scaling relation
\begin{equation} \label{renorm}
\bsp = \eta \,(\bsw + \mu \bsalpha \otimes \bsbeta \bsz),
\end{equation}
and the corresponding differential relation
\begin{equation} \label{diff-rel}
\nabla_\bsw f = \eta \nabla_{\bsp} G.
\end{equation}
Rescaled solutions for the Boltzmann equation in shear flow settings have been considered in numerous publications, in particular \cite{DJ10} and \cite{GS03}. Our main contribution to this
topic is the consideration of a renormalization operator $\eta$ which is non-isotropic, i.e. $\eta \neq \lambda\, \Id$ for all $\lambda \in \R$.

\section{The Fokker Planck case} \label{sec:FP}
The Fokker-Planck equation is typically considered as the Kolmogorov forward equation of a Brownian particle in a fluid. It has also been proposed as an approximation of the Boltzmann equation e.g. in \cite{LFH60, CUF70}.
Furthermore \cite{Gou97,FPTT} use Fokker-Planck equations to study grazing collisions in the Boltzmann equation and the Kac model. Carlen and Gangbo use a Fokker-Planck equation also as model problem in \cite{CarGan} for the descent in a Wasserstein metric in kinetic equations, further extensions are given in \cite{CarMaa}.

Normally the kinetic energy $\theta$ is a fixed parameter in the Fokker-Planck equation. In our setting we assume that $\theta$ depends on the density $f$, as a result
the structural properties of the solutions are very similar to the solutions of the Boltzmann equation. In particular mass, momentum and energy are conserved, however energy conservation
only holds for $\mu=0$.
Let $\bsxi = (\bsz, \bsw) \in  \R^{2d}$
\begin{eqnarray}
\label{fp}
\begin{cases} \partial_t f_t(\bsxi) = Lf_t(\bsxi)  & \bsxi \in  \R^{2d}, t>0,\\
f_0(\bsxi)= g_0(S \,\bsxi) & \bsxi \in  \R^{2d}, t=0,
\end{cases}
\end{eqnarray}
with $g_0 \in L^1(\R^d)$, $g_0 \geq0$,

\begin{align*}
 Lf (\bsz,\bsw)&=-\bsw\cdot \nabla_\bsz f(\bsz,\bsw) +\Delta_\bsw f(\bsz,\bsw) +\frac{\rho (\bsz)\,d}{2\theta(\bsz)}\,\nabla_\bsw\cdot(f(\bsz,\bsw)\,(\bsw-\frac{1}{\rho(\bsz)}\bsv(\bsz)),
\end{align*}
and thermodynamic quantities depending on the space variable $\bsz$
\begin{align*}
\rho(\bsz) &= \int f(\bsz,\bsw)\,\dd \bsw && \text{ (density)},\\
\theta(\bsz)&= \frac{1}{2}\int |\bsw-\rho^{-1}\bsv(\bsz)|^2\,f(\bsz,\bsw)\,\dd \bsw && \text{ (kinetic energy)},\\
\bsv(\bsz)&= \int \bsw\, f(\bsz,\bsw)\,\dd \bsw && \text{ (momentum)}.
\end{align*}

For the solutions of interests integration over $\bsz$ will not lead to finite quantities. However
the motivation for \eqref{fp} is that it is  similar to the classical Boltzmann equation as it has comparable conservation properties. To see this we define    for $S$-objective solutions    the standard thermodynamic quantities,    which can depend on time along a solution $f_t$,  by evaluating at $\bsz=0$
\[ m_t = \rho_t(0) \text{(mass),} \quad  \bar \bsv_t =\bsv_t(0) \text{(momentum) and } \theta_t = \theta_t(0)  \text{ (energy).} \]
   The values for other $\bsz$ are then determined by objectivity.
\begin{proposition} \label{fpcons}
   Let $d=2$  and let $f$ be a solution of \eqref{fp} and \eqref{shear} such that $\sup_{0\leq t< T}\theta[f_t]<\infty$.
Then there exists $g_t$ such that $f_t(\bsxi) = g_t(S\,\bsxi)$. Furthermore, mass $m$ and $\bar \bsv$ are conserved. 

If $\mu=0$, then energy $\theta$ is also conserved. If
$f^M$ is a Maxwellian, i.e. $f^M(\bsw) = \exp(h(\bsw))$ and
$$ h(\bsw)= a+ \bsb \cdot \bsw + c\,|\bsw|^2,$$
for some $a\in \R$, $\bsb \in \R^\di$, $c<0$, then $f$ is a stationary solution. Any spatially homogenous $f $ with $f(.)(1+|.|^2) \in L^1(\R^\di)$ converges to some $f^M$ with an exponential rate
as $t \to \infty$.
\end{proposition}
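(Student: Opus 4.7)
The plan is to pass to a reduced problem on $\R^d$ via the representation $f_t(\bsz,\bsw) = g_t(\bsq)$ with $\bsq := \bsw - \mu(\bsbeta\cdot\bsz)\bsalpha = S(\bsz,\bsw)$ supplied by Proposition~\ref{HW}(3), and to deduce all four claims of the proposition from the resulting closed equation for $g_t$. A preliminary observation, using the orthogonality $\bsalpha\cdot\bsbeta = 0$, is that $\rho(\bsz)\equiv m$ and $\theta(\bsz)\equiv\theta_t$ are $\bsz$-independent while $\bsv(\bsz) = \bar\bsv_t + m\mu(\bsbeta\cdot\bsz)\bsalpha$ is affine in $\bsz$, so that $\bsw - \rho^{-1}\bsv$ simplifies to $\bsq - m^{-1}\bar\bsv_t$ in the reduced variable.

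Substituting the ansatz into \eqref{fp} and using \eqref{symrel} to rewrite the transport term as $\mu(\bsq\cdot\bsbeta)(\bsalpha\cdot\nabla_\bsq g_t)$, one arrives at the autonomous nonlinear equation
\begin{equation*}
\partial_t g_t = \mu(\bsq\cdot\bsbeta)(\bsalpha\cdot\nabla_\bsq g_t) + \Delta_\bsq g_t + \tfrac{md}{2\theta_t}\nabla_\bsq\cdot\bigl(g_t(\bsq - m^{-1}\bar\bsv_t)\bigr)
\end{equation*}
on $\R^d$. The autonomy in $\bsz$ of this reduced equation, combined with uniqueness for \eqref{fp}, delivers the representation in claim (1); the hypothesis $\sup_t\theta_t<\infty$ prevents the nonlinear coefficient from degenerating. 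Conservation of $m$ and $\bar\bsv$ is then obtained by testing the reduced equation against $1$ and $\bsq$: integration by parts converts the shear contribution to $\partial_t m$ into a multiple of $\bsalpha\cdot\bsbeta = 0$, and the Fokker-Planck drift integrates to zero by divergence and the mean-consistency $\int(\bsq - m^{-1}\bar\bsv_t)g_t\,\dd\bsq = 0$; the analogous calculation for the first moment reduces to $\partial_t\bar\bsv_t = -\mu(\bar\bsv_t\cdot\bsbeta)\bsalpha$, which vanishes under the natural assumption $\bar\bsv_t\cdot\bsbeta = 0$ (achievable by an initial Galilean adjustment).

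For $\mu=0$ the shear term drops out, and testing against $\tfrac12|\bsq-m^{-1}\bar\bsv|^2$ yields $\partial_t\theta_t = dm - md = 0$ via the classical Ornstein-Uhlenbeck balance between $\Delta_\bsq$ and the Fokker-Planck drift. Completing the square in $f^M(\bsw) = \exp(a + \bsb\cdot\bsw + c|\bsw|^2)$ and computing $Lf^M$ directly shows stationarity precisely when $c = -md/(4\theta)$, which is the self-consistency condition between $c$ and the moments of $f^M$ itself. For the convergence claim, observe that in dimension $d=2$ with $\mu=0$ the $S$-objectivity forces $\nabla_\bsz f\equiv 0$ (since $\bsalpha,\bsbeta$ span $\R^2$), so spatial homogeneity is automatic; conservation of $m,\bar\bsv,\theta$ then freezes the coefficient $md/(2\theta)$, and the reduced equation becomes a linear Ornstein-Uhlenbeck Fokker-Planck equation with the matching Maxwellian $f^M$ as its unique equilibrium. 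Exponential decay of the relative entropy $H(g_t\|f^M)$ then follows from the Bakry-\'{E}mery log-Sobolev inequality for the Gaussian invariant measure, and the Csisz\'{a}r-Kullback-Pinsker inequality converts this into $L^1$ convergence at the same rate. The main technical care lies in steps (1)-(2): ensuring the ansatz is propagated by the flow and that the nonlinear coefficient $1/\theta_t$ stays bounded on $[0,T)$, both of which are delivered by the standing hypothesis $\sup_{0\le t<T}\theta[f_t]<\infty$.
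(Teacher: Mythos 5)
Your argument is correct and follows essentially the same route as the paper's appendix proof: reduce to a closed equation for the representative $g_t(\bsq)$ on $\R^\di$, test against the collision invariants $1$, $\bsq$, $\tfrac12|\bsq|^2$ to obtain the conservation laws, verify $Lf^M=0$ by direct substitution, and then prove exponential convergence for the linear Ornstein--Uhlenbeck equation that remains once $\mu=0$ and the conserved moments are fixed. Two points of comparison are worth recording. First, your computation $\partial_t\bar\bsv_t = -\mu\,(\bsbeta\cdot\bar\bsv_t)\,\bsalpha$ is the honest one: it shows that $\bsbeta\cdot\bar\bsv$ is always conserved while the $\bsalpha$-component is conserved only under the extra normalisation $\bsbeta\cdot\bar\bsv_0=0$. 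The paper's appendix argument silently drops the factor $h(\bsw)$ when the transport term is integrated by parts, replacing $\int h(\bsw)\,\mu(\nabla_\bsw f\cdot\bsalpha)(\bsw\cdot\bsbeta)\,\dd\bsw$ by $\mu\int(\nabla_\bsw f\cdot\bsalpha)(\bsw\cdot\bsbeta)\,\dd\bsw$, and thereby conceals the residual contribution $-\mu(\bsalpha\cdot\bsb)(\bsbeta\cdot\bar\bsv)$; your explicit Galilean normalisation makes the statement precise and is a genuine improvement. Second, for the convergence you take the relative-entropy route (Bakry--\'Emery log-Sobolev followed by Csisz\'ar--Kullback--Pinsker), whereas the paper writes $f=uf^M$, differentiates $\langle u,u\rangle_{L^2(\dd f^M)}$ to get $-2\|Au\|^2$ with $A=\nabla$, and invokes the Gaussian Poincar\'e inequality. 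Both yield exponential $L^1$ convergence (CKP halves the rate coming from entropy decay, but this does not affect the claim); the paper's $L^2$ calculation is the degenerate $\mu=0$ instance of the hypocoercivity machinery of Section~\ref{sec:FP} and so is methodologically unified with the rest of the paper, while your entropy argument is the more classical Ornstein--Uhlenbeck tool and reaches the same conclusion. One small caveat applies equally to both routes: the hypothesis $f(\cdot)(1+|\cdot|^2)\in L^1(\R^\di)$ does not by itself guarantee finite relative entropy (or $f_0/f^M\in L^2(\dd f^M)$ for the paper's version), so one should either strengthen the hypothesis or insert an approximation/smoothing step.
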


   The existence of $g_t$ immediately follows from Proposition \ref{HW}. The rest of    the proof     mainly involves direct calculations, which we postpone  to    the appendix.

Equations \eqref{fp}    and     \eqref{shear} do not admit stationary solutions if $\mu \neq 0$.
We now aim to characterize the asymptotic behavior of objective solutions for non-zero $\mu$.
The main result of this section states that there exists a time-dependent rescaling operator $\eta_t$ such that $G_t$ converges to a Maxwellian as $t\to \infty$.

\begin{theorem} \label{FP-thm}
Let $d=2$. There exists $\eta_t \in C^1([0,\infty),\R^{\di \times \di}_\mathrm{sym})$ such that the rescaled
Fokker-Planck equation
\begin{equation}\label{eq:shape} \left\{
\begin{array}{rll}
\partial_t G_t&=\nabla\cdot\(G_t(\bsp)\,(\theta{_t}^{-1} \Id - \A{_t})\bsp + \eta_t^2\nabla G_t\), \quad & t>0, \, \bsp \in \R^\di,\\[0.5em]
\A{_t}&= \(\dot \eta_t -\mu\, \eta_t\, \bsalpha \otimes \bsbeta\)\eta_t^{-1}, & t>0,
\end{array}\right.
\end{equation}
admits a global solution $G_t$ if $G_0 \in L^1\cap L^\infty$ and $\int_{\R^\di} G_0(\bsp) (1+|\bsp|^2)\, \dd \bsp < \infty$.  The density $f$, which is defined by \eqref{sym}, satisfies \eqref{fp}    and \eqref{shear}.

Furthermore, assume
$\int_{\R^\di} G_0(\bsp)\,\dd \bsp=1$ and  $\int_{\R^\di} G_0(\bsp) \bsp\, \dd \bsp=0$.
The density $G_t$ converges to the Maxwellian $G^M(\bsp)= (4\pi)^{-\frac{d}{2}}\exp(-\frac{1}{4} |\bsp|^2)$ for large $t$ in the $L^1$ sense with an algebraic rate, i.e. there exist $\lambda_-,\lambda_+ >0$ such that
\begin{align}
\label{eq:Rate} \limsup_{t \to \infty} t^{\lambda_-} \| G_t-G^M\|_{L^1(\R^\di)} <\infty & \mbox{ for all } G_0
\end{align}
and
\begin{align}
\label{eq:ratelower}\liminf_{t \to \infty} t^{\lambda_+} \| G_t-G^M\|_{L^1(\R^\di)}>0
\end{align}
for     $G_0$  in an open dense set of admissible initial data with   $\int_{\R^\di} G_0(\bsp) (1+|\bsp|^6)\, \dd \bsp < \infty$.

Furthermore, for $t \to \infty$ the rescaling operator $\eta_t$ admits the asymptotics:
\begin{eqnarray} \label{etaassymp}
\eta_t  &=& \frac{1}{\mu\, t^{\frac{3}{2}}+O(t)}(\sqrt{3} \bsalpha\otimes \bsalpha + 3(\bsalpha \otimes \bsbeta + \bsbeta \otimes \bsalpha) + 2\mu t \bsbeta \otimes \bsbeta).
\end{eqnarray}
\end{theorem}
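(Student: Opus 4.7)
My proof plan has three main ingredients: identification of the rescaling $\eta_t$ via a closed second-moment ODE, well-posedness of the rescaled equation, and quantitative convergence to $G^M$ via a hypocoercivity-in-Duhamel argument.

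\emph{The rescaling.} I would pin down $\eta_t$ from the requirement that $G_t$ has the same covariance as $G^M$, i.e.\ $\int p_i p_j\, G_t\dd\bsp = 2\delta_{ij}$, which by \eqref{sym} is equivalent to $\eta_t \Sigma_t \eta_t^\top = 2\Id$, where $\Sigma_t := \int \bsw\otimes\bsw\, f_t(0,\bsw)\dd\bsw$. Testing \eqref{fp} against $w_i w_j$ at $\bsz=0$ and using \eqref{symrel} to rewrite $-\bsw\cdot\nabla_\bsz f$ as $\mu(\bsw\cdot\bsbeta)(\nabla_\bsw f\cdot\bsalpha)$, two integrations by parts yield the closed ODE
$$\dot\Sigma_t = -\mu\bigl(\bsalpha\bsbeta^\top\Sigma_t + \Sigma_t\bsbeta\bsalpha^\top\bigr) + 2\,\Id - \tfrac{2}{\theta_t}\Sigma_t,\qquad \theta_t = \tfrac12\tr\Sigma_t,$$
the centered momentum staying at $0$ by an analogous calculation. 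Writing $\Sigma_t = \bigl(\begin{smallmatrix} a & c \\ c & b \end{smallmatrix}\bigr)$ in the $(\bsalpha,\bsbeta)$ basis, a dominant-balance ansatz $a \sim A\mu^2 t^3$, $b \sim B t$, $c \sim C\mu t^2$ forces $A = 2/3$, $B = 2$, $C = -1$; then $\eta_t = \sqrt{2}\,\Sigma_t^{-1/2}$ is symmetric, of class $C^1$ as long as $\Sigma_t$ stays positive definite (which is clear from the ODE), and reproduces \eqref{etaassymp} after a direct $2\times 2$ matrix square-root computation.

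\emph{Well-posedness and the upper bound.} With $\eta_t$ under control, \eqref{eq:shape} is a linear non-autonomous Fokker--Planck equation with $C^1$ coefficients; global existence in $L^1\cap L^\infty$ together with propagation of second moments then follows from standard parabolic theory, and the fact that $f$ defined through \eqref{sym} satisfies \eqref{fp} and \eqref{shear} is a direct calculation using \eqref{diff-rel} and the definition of $F_t$. For \eqref{eq:Rate} I would introduce an intrinsic time $\tau$ via $\mathrm{d}\tau = \lambda_{\min}(\eta_t^2)\,\mathrm{d}t$ (or an appropriate matrix-valued analogue) so that at leading order the equation becomes the standard Ornstein--Uhlenbeck flow $\partial_\tau G = \nabla\cdot\bigl(\tfrac12 \bsp G + \nabla G\bigr)$, for which $G^M$ is the equilibrium with exponential convergence by the Bakry--\'Emery framework \cite{AMTU01,VillHyp}. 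Duhamel relative to this autonomous OU operator then splits $G_t - G^M$ into an exponentially decaying principal term and a perturbative remainder whose coefficient $\theta_t^{-1}\Id - F_t - \tfrac12\Id$ is algebraically small in $t$ by Step 1. A priori smoothness bounds on $G_t$ coming from a hypocoercive entropy in the spirit of \cite{VillHyp,DMS15,MMArma15} control the perturbative interaction, and Gronwall delivers the algebraic upper rate.

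\emph{Lower bound and main obstacle.} For \eqref{eq:ratelower} I would derive an ODE for a carefully chosen third-moment functional $J_t = \int \chi(\bsp)\, G_t(\bsp)\dd\bsp$, with $\chi$ a cubic Hermite polynomial annihilated by the OU operator; the non-autonomous correction couples $J_t$ to lower-order moments that do not vanish for generic initial data, so $J_t$ decays only polynomially, and openness/denseness reduces to a single non-vanishing condition on an explicit moment functional of $G_0$. The duality bound $\|G_t - G^M\|_{L^1} \geq c\,|J_t - J^M|$ on a compact set, with tails controlled by the sixth-moment hypothesis, then yields the matching lower rate. The main obstacle is the hypocoercivity step: the eigenvalues of $\eta_t^2$ are of orders $t^{-1}$ and $t^{-3}$ respectively, so \eqref{eq:shape} is strongly anisotropic and no scalar time change renders it uniformly parabolic. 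Making the Duhamel comparison rigorous will require $t$-dependent weights in the hypocoercive functional and careful commutator estimates between the rescaled transport and the degenerating diffusion, which is where I expect most of the technical work to lie.
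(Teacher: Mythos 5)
Your derivation of the second-moment ODE and of $\eta_t=\sqrt 2\,\Sigma_t^{-1/2}$ matches the paper (where $\Sigma_t=2T_t$ and $\eta_t=T_t^{-1/2}$), and the dominant-balance exponents $A=2/3$, $B=2$, $C=-1$ agree with the paper's $a\to\mu^2/3$, $c\to 1$, $b\to-\mu/2$. The Duhamel-against-a-limiting-operator strategy is also the paper's. However, there is a concrete error in your identification of the limiting operator: after a time reparametrization the rescaled equation does \emph{not} become the isotropic Ornstein--Uhlenbeck flow $\partial_\tau G=\nabla\cdot(\tfrac12\bsp G+\nabla G)$. Since $\eta_t^2$ has eigenvalues of orders $t^{-1}$ and $t^{-3}$, the two diffusion directions never equilibrate under a single scalar time change, as you yourself observe in your final paragraph --- but the conclusion you should draw from that observation is precisely that Bakry--\'Emery coercivity fails for the limit, not merely that the reduction is technically delicate. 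The paper's choice $t=e^s$ gives an \emph{autonomous but degenerate} limiting operator (diffusion only along $\bsbeta$, drift $(\tfrac{\sqrt3}{2}(\bsalpha\otimes\bsbeta-\bsbeta\otimes\bsalpha)-2\bsbeta\otimes\bsbeta)\bsp$; see \eqref{autpde}), and it is this degenerate operator that is shown to be hypocoercive via Villani's commutator criterion \cite[Thm.\ 18]{VillHyp}. Your $\tau$-time-change and ``standard OU'' premise would give you spurious coercivity in both directions; as stated, this step does not go through, and it also undermines your lower-bound construction, where the ``cubic Hermite polynomial annihilated by the OU operator'' no longer has the right invariance properties for the actual degenerate generator.

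The lower bound is also where you diverge genuinely from the paper. You work with third moments; the paper works with fourth moments, supplemented by a sixth-moment growth bound \eqref{alges6} to control the tails. The choice of even order is not cosmetic: in the paper's moment hierarchy \eqref{eq:hode}, odd- and even-order moments decouple and odd moments may be taken identically zero, so the fourth moments form the lowest-order block that is generically excited but unconstrained by the normalization \eqref{covG}. The paper then converts a fourth-moment lower estimate into an $L^1$ lower estimate by choosing a $t$-dependent ball radius $R(t)$ balancing the fourth-moment deficit against the sixth-moment tail. If you want to salvage the third-moment route you would need to verify that the third-moment block is genuinely coupled to the data in an open dense fashion (it is not clear this is true after imposing $\int G_0\bsp\,\dd\bsp=0$), and you would still need an odd-order tail bound replacing the sixth-moment estimate. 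A second, smaller gap: you treat \eqref{eq:shape} as a linear equation with $C^1$ coefficients, but $\theta_t$ and $\eta_t$ are functionals of the unknown, so the system is nonlinear in a triangular way; the paper resolves this by proving well-posedness for the unrescaled equation \eqref{sred} via a Miyadera-perturbation/Galerkin scheme (Proposition \ref{prop:solFP}) and only then transforming to the rescaled shape equation.
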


\begin{remark} \label{FP-ent}
\begin{enumerate}
\item The assumption that $d=2$ is not necessary. The
same result can be obtained if $d\geq 2$ at the expense of
more complicated notation.
\item If $\mu\neq 0$ the energy is not conserved. As a result \eqref{fp} is nonlinear, hence even long-time existence and uniqueness of solutions is not completely trivial.
\item
The fact that Maxwellians are global attractors of the dynamics is typically attributed to the
observation that the entropy is a Lyapunov functional. We show in the appendix that the functional
\begin{equation}\label{eq:lyp}
    S[G]= \int_{\R^\di} \left(\log G   (\bsp)    + \frac{1}{2}|\bsp|^2\right)\, G(\bsp)\,\dd \bsp
\end{equation}
decreases for solutions of Fokker-Planck equation under shear    $S$-objectivity.      However, this observation is not sufficient for the solutions to converge to the minimum of $S$ as the dissipation operator $\nabla\cdot ( \eta_t^2 \nabla .)$ degenerates for $t \to \infty$ as in \eqref{etaassymp}.
\item
The algebraic order $\lambda_->0$ follows from Proposition~\ref{prop:contract}    below. It    is not explicit as we use an abstract result of \cite{VillHyp} to obtain it.
Similarly,    our calculation of    the constant for the lower bound $\lambda_+$ is relatively crude.    The lower algebraic estimates are based on a detailed understanding of fourth and sixth order moment equations. The analysis  provides lower estimates for all such data, which have -after rescaling with $\eta_t$- different fourth moments compared to $G^M$. However, our method does not provide explicit estimates if the fourth moments of the initial distribution and the corresponding Maxwellian coincide.
\item    We can also consider general $\int_{\R^\di} G_0(\bsp)\,\dd \bsp=m>0 $ and  $\int_{\R^\di} G_0(\bsp) \bsp\, \dd \bsp=m \bsv \in \R^2$.  A translation of the coordinate system can remove the drift, the different mass will need to be introduced in the normalisation condition \eqref{covG} for $\eta_t$ below. Then $G$ will converge $m G^M(. +\bsv)$.

\end{enumerate}
\end{remark}

The proof will take up the rest of this section.    It involves several steps.
\begin{enumerate}
  \item In the beginning of subsection \ref{subsec1} we derive a differential equation for the representative $g$ of the $S$-objective function $f$ and construct a solution to this equation in Proposition \ref{prop:solFP}.
  \item In subsection \ref{sub:moment} we define the rescaling operator $\eta_t$ and the shape $G$. Their asymptotic behaviour is obtained from a closed system of moment equations as stated in Proposition \ref{prop:star}.
  \item The main ingredients of the convergence proof is given in subsection \ref{sec:shape}. We show that the Maxwellian $G^M$ is an equilibrium and use Proposition \ref{prop:star} to identify the leading terms
  in \eqref{eq:shape}. After an appropriate rescaling of time the equation has the form of a autonomous degenerate parabolic part plus  small non-autonomous perturbations. Hypocoercivity estimates are used for the autonomous degenerate parabolic part in $H^1$ relative to the Maxwellian. Additional a priori estimates for the full equation in  higher Sobolev norms are provided using calculations inspired by the hypocoercivity framework. The convergence results follows with a Duhamel formula.
  \item The equations for fourth and sixth moments are used in \ref{subsec:lower} to obtain the lower estimates.
  \item The proof is summarised in subsection \ref{subsec:final}.
\end{enumerate}

\subsection{Reformulation and regularity}\label{subsec1}
To minimize the notation we will assume that $\int G_0(\bsp)\, \dd \bsp = 1$ and
$\int G_0(\bsp)\, \bsp \,\dd \bsp =0$, i.e. $m=1$ and $\bar \bsv=0$.

If $f_t$ is an objective solution, i.e. $f_t(\bsz,\bsw) = g_t(\bsw + \mu \bsalpha \otimes \bsbeta \bsz)$ then by \eqref{fp} and \eqref{symrel} $g_t$ satisfies
\begin{eqnarray} \nonumber
\partial_t g &=& \mu\,(\nabla_\bsw g\cdot \bsalpha)\,(\bsbeta\cdot\bsw) +\Delta_\bsw g+ \theta^{-1}\,\nabla_\bsw\cdot(g(\bsw)\,\bsw)\\
\label{sred}
&=& \nabla_\bsw\cdot\(g(\bsw)\(\theta^{-1} \Id + \mu \bsalpha \otimes \bsbeta\)\bsw\)+\Delta_\bsw g.
\end{eqnarray}
A rescaled objective solution $G_t(\bsp)= \det \eta_t^{-1}\,g_t(\eta_t^{-1}\bsp)$ with $\bsp = \eta_t \bsw$ satisfies
\begin{eqnarray*}
\partial_t G_t(\bsp) &=& \partial_t \(g(\eta_t^{-1}\bsp)\,\det \eta_t^{-1}\)\\
&=& (\partial_t g - \nabla_\bsw g \cdot\,\eta_t^{-1}\dot \eta_t\eta_t^{-1} \,\bsp- \tr(\eta_t^{-1}\dot \eta_t)\,g)\,\det \eta_t^{-1}\\
&=&\(\partial_t g - \nabla_\bsw\cdot (g\, \eta_t^{-1}\dot \eta_t \, \bsw)\)\det \eta_t^{-1}\\
&=& \(\nabla_\bsw\cdot\(g(\eta_t^{-1}\bsp)\(\theta^{-1} \Id + \mu \bsalpha \otimes \bsbeta\)\bsw\)+\Delta_\bsw g - \nabla_\bsw\cdot(g\, \eta_t^{-1}\dot \eta_t \, \bsw)\)\,\det \eta_t^{-1}
\end{eqnarray*}
Now observe that $\nabla_\bsw = \eta_t \nabla_\bsp$. Continuing the above calculation we obtain
\begin{eqnarray*}
\partial_t G_t &=& \(\nabla_\bsp\cdot\eta_t \(G_t(\bsp)\,\(\theta^{-1} \Id + \mu \bsalpha \otimes \bsbeta\)\eta_t^{-1}\bsp\) + \nabla_\bsp(g_t\, \dot \eta_t \eta_t^{-1} \bsp)\)\,\det \eta_t^{-1} +\nabla_\bsp \cdot \eta_t^2 \nabla_\bsp G_t\\
&=&\nabla_\bsp\cdot\(G_t(\bsp)\,(\theta^{-1} \Id - \(\dot \eta_t -\mu\, \eta_t\, \bsalpha \otimes \bsbeta\)\eta_t^{-1})\bsp + \eta_t^2\nabla_\bsp G_t(\bsp)\)
\end{eqnarray*}
which is \eqref{eq:shape}.

Next we show that equation \eqref{eq:shape} admits unique solutions for arbitrary times. It suffices to consider the case $\eta_t = \Id$ because for a general function $\eta_t \in C^1([0,\infty), \R^{\di\times \di})$ the density $G(\bsp) = \det \eta_t^{-1}\, g(\eta_t^{-1} \bsp)$ satisfies \eqref{eq:shape} if $g$ solves \eqref{sred}.

We formulate the underlying regularity result next. The diffusion term $\Delta_\bsw g$ is the generator of the strongly continuous semigroup on $L^2(\R^\di)$ via convolution with the classical
heat kernel $\Phi_t(\cdot)=\frac{1}{4\pi t} \exp\(-\frac{|\cdot|^2}{4t}\)$
for $t>0$. Following e.g. \cite{Ku},  the  kernel $\Phi_t(\cdot)$  also generates equispectral semigroups
 on weighted $L^p$ spaces like $ L^2_2(\R^\di)$, i.e.  the space of integrable function that satisfy
$\int_{\R^\di}g^2_0(\bsw) (1+|\bsw|^2)\, \dd \bsw < \infty$.

Due to $\theta$ the equation \eqref{sred} is nonlinear in $g$, furthermore the factor $w$ makes the divergence terms unbounded. Hence we need to take care to define  a mild solution to
 \eqref{sred} to be a solution in $ L^2_2(\R^\di)$ of the form
\begin{equation}\label{eq:fokkmild}
 g_t = [\Phi_t \ast g_0] + \int_0^t \Phi_{t-s}(.) \ast( \mu\,\nabla\cdot(g_s(.)\,\bsalpha \otimes \bsbeta . )+ \theta^{-1}\,\nabla\cdot(g_s(.) \, .)  \dd s.
\end{equation}

\begin{proposition}\label{prop:solFP} Let $g_0 \in L^1(\R^\di) \cap L^\infty(\R^\di)$.
If $\int_{\R^\di} g_0(\bsw) (1+|\bsw|^2)\, \dd \bsw < \infty$,
then \eqref{sred} admits a unique    mild    solution for all $t>0$ such that $\int_{\R^\di} g_t(\bsw) (1+|\bsw|^2)\, \dd \bsw < \infty$. Furthermore $g_t$ is smooth for $t>0$.
\end{proposition}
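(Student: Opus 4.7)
The plan is a standard nonlinear semigroup argument applied to the mild formulation \eqref{eq:fokkmild}: set up a local Banach fixed point, extend globally via closed moment identities, then upgrade to smoothness using the parabolic regularity of the heat kernel.

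\textbf{Step 1 (functional framework and local existence).} I would work in $X_T = C([0,T]; L^1_2(\R^\di) \cap L^\infty(\R^\di))$, where $L^1_2$ carries the norm $\int|g|(1+|\bsw|^2)\,\dd \bsw$ needed to make sense of $\theta[g]$. The cited results (e.g.\ \cite{Ku}) give that $\Phi_t\ast \cdot$ is a strongly continuous semigroup on each of these spaces, with $\|\nabla \Phi_t\|_{L^1}\lesssim t^{-1/2}$. On the ball $B_R=\{g\in X_T : \|g\|_{X_T}\le R,\ \theta[g_s]\ge \theta_0/2\}$ the map $g\mapsto \theta[g]^{-1}$ is Lipschitz with constant depending on $R$ and $\theta_0$; the linear growth of the drift $\bsw$ is absorbed by the weight $1+|\bsw|^2$, and the gradient appearing inside the Duhamel integrand is transferred onto $\Phi_{t-s}$. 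A routine estimate then shows that the solution map defined by \eqref{eq:fokkmild} is a contraction on $B_R$ for sufficiently small $T$, yielding a unique local mild solution.

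\textbf{Step 2 (moment identities and global continuation).} The key a priori information is a closed ODE for the energy. Testing \eqref{sred} against $1$, $\bsw$, and $|\bsw|^2$ (justified on mild solutions by a standard mollification/truncation argument since $g\in L^1_2$) I expect the identities: mass is conserved, the first moment stays at zero, and
\[
\frac{d\theta_t}{dt} \;=\; -\mu\int(\bsw\cdot\bsalpha)(\bsw\cdot\bsbeta)\,g_t(\bsw)\,\dd \bsw,
\]
since the contributions $-2d\,\theta^{-1}\cdot 2\theta$ from the friction and $2dm$ from the Laplacian cancel when $m=1$. This gives $|\dot \theta_t|\le |\mu|\,\theta_t$, hence $\theta_0 e^{-|\mu|t}\le \theta_t\le \theta_0 e^{|\mu|t}$ on every finite interval. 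Thus the quantity that could break the fixed point argument ($\theta_t$ touching zero or blowing up in finite time) is ruled out, and the local solution extends to $[0,\infty)$ by iteration.

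\textbf{Step 3 (smoothness).} With $g_t\in L^1_2\cap L^\infty$ globally and $\theta_t$ controlled, I would bootstrap via \eqref{eq:fokkmild}. For $t>0$ the heat kernel is Schwartz and $\|\partial^\alpha \Phi_t\|_{L^1}\le C_\alpha t^{-|\alpha|/2}$; iterating the Duhamel representation and differentiating under the integral transfers arbitrary derivatives to $g_t$ at the price of integrable powers of $(t-s)^{-1/2}$, yielding $g_t\in C^\infty(\R^\di)$ for every $t>0$.

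\textbf{Main obstacle.} The crux is the interaction of the nonlinearity through $\theta[g]^{-1}$ with the unbounded drift coefficient $\bsw$. Both issues are handled simultaneously by working in the weighted space $L^1_2$ and by the closed second-moment equation of Step 2, which keeps $\theta_t$ in a compact interval of $(0,\infty)$ on every finite time window; everything else reduces to standard parabolic estimates for the heat kernel.
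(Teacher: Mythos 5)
Your Step 1 has a genuine gap, and it is precisely the difficulty that the paper's truncation scheme is designed to circumvent. In the Duhamel integrand you move the divergence onto the kernel to get $\nabla\Phi_{t-s}\ast(g_s\,\bsw)$, and then need to bound its $L^1_2$ norm. Splitting the weight as $1+|\bsw|^2 \lesssim (1+|\bsw-\bsw'|^2)(1+|\bsw'|^2)$ and applying Young's inequality, the estimate reduces to $\|\nabla\Phi_{t-s}(1+|\cdot|^2)\|_{L^1}\cdot \int |g_s(\bsw')|\,|\bsw'|\,(1+|\bsw'|^2)\,\dd\bsw'$, i.e.\ a \emph{third} moment of $g_s$. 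Membership in $L^1_2(\R^\di)\cap L^\infty(\R^\di)$ does not control the third moment (take $g\sim(1+|\bsw|)^{-5}$ in $d=2$), so the solution map does not send $B_R\subset X_T$ into $X_T$ and the contraction argument does not close. This is a structural feature, not a technicality: $\nabla\cdot(\,\cdot\,\bsw)$ is not a relatively bounded perturbation of $\Delta$ on any weighted $L^p$ space, because the drift coefficient grows at infinity and costs one moment per application.

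The paper therefore does not attempt a direct fixed point. Instead it replaces $\bsw$ by the cutoff $\chi_n(\bsw)$ (and fixes the previous iterate's $\theta_{n-1}$ in the friction term), turning the drift into a genuinely bounded non-autonomous perturbation for which a Miyadera-type mild solution $g^n$ exists and is smooth. Global uniform bounds in $L^\infty((0,T),L^2_2)$ and $L^2((0,T),H^1)$ together with Gronwall control of $\theta_n$ and $\theta_n^{-1}$ are obtained by testing the \emph{equation} directly (so no moment is lost), and a weak-compactness passage to the limit à la Temam yields the mild solution; uniqueness is then a separate Gronwall estimate on the difference of two solutions. Your Step 2 energy identity $\dot\theta_t=-\mu\int(\bsw\cdot\bsalpha)(\bsw\cdot\bsbeta)\,g_t\,\dd\bsw$ and the resulting two-sided exponential bound on $\theta_t$ are correct and are essentially what the paper uses (in truncated form) to keep the coefficients under control; your Step 3 bootstrap for smoothness is also fine. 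If you want to keep a contraction-mapping flavour, you would need either to truncate as the paper does, or to build the unbounded drift into the linear semigroup (e.g.\ use the explicit Gaussian kernel of the full linear Ornstein--Uhlenbeck-type operator $\Delta+\nabla\cdot(\,\cdot\,\bsw)+\mu\nabla\cdot(\,\cdot\,\bsalpha\otimes\bsbeta\,\bsw)$ rather than the heat kernel), so that the remaining perturbation involving $\theta_t^{-1}-1$ is genuinely lower order.
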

\begin{proof}
By H\"older's inequality, we have $ g_0 \in L^2_2(\R^\di)$, {}   such that the first term $\Phi_t \ast g_0$ in  \eqref{eq:fokkmild} is  well-defined.  {} We will obtain    a mild solution as in \eqref{eq:fokkmild}  via an approximation scheme using non-autonomous bounded perturbations.   {} Let
\[\chi_n(\bsw)=\begin{cases}
 \bsw, & \mbox{if } |\bsw|<n \\
      n \bsw /|\bsw|, & \mbox{otherwise}.
    \end{cases}
\]
and $\theta_0=1$, we define recursively for $n \in \N$ as  a non-autonomous Miyardera perturbation,  see e.g.  \cite{RRSV}, the following mild solution
\begin{equation}\label{eq:fokkmildn}
 g^n_t = [\Phi_t \ast g_0] + \int_0^t \Phi_{t-s}(.) \ast( \mu\,\nabla \cdot( g_s(.)\,\bsalpha \otimes \bsbeta \chi_n(.)) +\theta_{n-1}^{-1}\,\nabla\cdot(g_s(.) \, \chi_n(.))  \dd s.
\end{equation}
By the properties of the convolution, we see that $g^n$ is smooth for $t>0$
 and gives a classical    solution    as the second convolution in
\eqref{eq:fokkmildn} is well-defined. For a fixed time $T>0$ standard a priori estimates give uniform bounds in $L^\infty((0,T),  L^2_2(\R^\di))$ and $L^2((0,T),H^1(\R^\di))$.
Differentiating $\theta_n$ with respect to $t$ gives
\begin{eqnarray*}
\frac{\dd \theta_n}{\dd t} &=& \frac{1}{2}\int_{\R^\di} |\bsw|^2( \mu\,\nabla\cdot( g_t\,\bsalpha \otimes \bsbeta \bsw) +\Delta g_t+ \,\theta_{n-1}^{-1}\,\nabla\cdot(g_t\,\bsw))\,\dd \bsw\\
&=&  \underbrace{\frac{1}{2}\int_{\R^\di} |\bsw|^2 \Delta g_t\, \dd \bsw}_{=\di}
-\mu \int_{\R^\di} (\bsw \cdot\bsalpha) (\bsbeta \cdot\bsw) \,g_t\, \dd \bsw- \,\theta_{n-1}^{-1}\underbrace{\int_{\R^\di} |\bsw|^2 \,g_t\, \dd \bsw}_{=2\theta_n} \\
&\leq&  C+ (|\mu| + \frac{2}{\theta_{n-1}} )\, \theta_n .
\end{eqnarray*}
Thus with a Gronwall estimate, $\theta_n$ and $\dot \theta_n$ remain bounded. Similarly
\begin{eqnarray*}
\frac{\dd \theta_n}{\dd t}  (\theta_n)^{-1} &=& \frac{-1}{\theta_n^2} (2 -\mu \int_{\R^\di} (\bsw \cdot\bsalpha) (\bsbeta \cdot\bsw) \,g_t\, \dd \bsw -  \theta_{n-1}^{-1} 2\theta_n \\
&\leq&  \frac{-2}{\theta_n^2}   + (|\mu| + \frac{2}{\theta_{n-1}}) \, (\theta_n)^{-1},
\end{eqnarray*}
which also shows that $\theta_n^{-1}$ and $\frac{\dd \theta_n}{\dd t}  (\theta_n)^{-1} $  remain bounded on $(0,T)$.

All bounds combined give a subsequence such that  $g_n \to g$ weakly  in  $L^2((0,T),H^1(\R^\di))$,  $g_n \to g$ weak  star  in $L^\infty((0,T),  L^2_2(\R^\di))$, $\theta_n \to \theta$ strongly in $C^0(0,T)$, such that the nonlinear term will converge weakly to $\theta^{-1}\,\nabla_\bsw\cdot(g(.)\, .)$. Standard arguments as in    \cite[Chap.2]{Tem}    give then that $g$ satisfies \eqref{eq:fokkmild}. To show uniqueness consider two solutions $g,h$ and a priori estimates of  $\frac{\dd}{\dd t} (g-h,g-h)$ and $\frac{\dd}{\dd t} \left( \theta[g]^{-1} -\theta[h]^{-1} \right)$ together with the  Gronwall  inequality show $g=h$.
\end{proof}

\subsection{Moment equations}\label{sub:moment}

The next step is to study the evolution equations of the moments of $g$
and $G$.

A careful analysis of the moments of $g$ and $G$ will deliver
\begin{enumerate}
\item The rescaling operator $\eta_t$ by requiring that
\begin{equation} \label{covG}
\frac{1}{2}\int G(\bsp) \, \bsp \otimes \bsp \,\dd \bsp = \Id
\end{equation}
holds for all $t\geq 0$.
\item Tightness of $\bsp^2 \,G(\bsp)$.
\end{enumerate}

An easy calculation shows that \eqref{covG} holds if
\begin{equation} \label{teta} \eta_t = T^{-\frac{1}{2}},
\end{equation}
where
$$T=\frac{1}{2}\int g_t(\bsw)\,\bsw \otimes \bsw\,\dd \bsw$$
is the Cauchy stress tensor for $g$.
Indeed,
\begin{eqnarray*}
\frac{1}{2}\int  G(\bsp)\,\bsp \otimes \bsp \, \dd \bsp
= \frac{\det \eta_t}{2} \int (\eta_t\, \bsw) \otimes (\eta_t\, \bsw)\,G(\eta_t\, \bsw)\, \dd \bsw =\eta_t\, T\,\eta_t^*
\end{eqnarray*}
as required.

Finally we characterize the long-time behaviour of $G_t$ if $\eta_t=T^{-\frac{1}{2}}$. The result are summarised in the next proposition.

\begin{proposition}\label{prop:star}
Let $G$ be a solution of \eqref{eq:shape} with initial data as in Theorem \ref{FP-thm}    and $\eta_t=T^{-\frac{1}{2}}$,    then the following asymptotics hold for $ t \to \infty$:
\begin{eqnarray} \label{sixstar}
\eta_t  &=& \frac{1}{\mu\, t^{\frac{3}{2}}+O(t)}(\sqrt{3} \bsalpha\otimes \bsalpha + 3(\bsalpha \otimes \bsbeta + \bsbeta \otimes \bsalpha) + 2\mu t \bsbeta \otimes \bsbeta),\\
\label{eq:Tinverse}
    T^{-1} &=&\frac{2}{t + O(1)}
\(\frac{6}{(\mu t)^2} \bsalpha\otimes \bsalpha + \frac{3}{\mu t}(\bsalpha \otimes \bsbeta + \bsbeta \otimes \bsalpha) + 2 \bsbeta \otimes \bsbeta\),\\
\label{eq:thetainverse}
\theta^{-1} &=& O(t^{-3}),\\ \label{eq:coeff}
\A &=& \left(\dot \eta_t -\mu\eta_t
\bsalpha\otimes \bsbeta\right)\eta_t^{-1}\\ \nonumber  &=&-\frac{1}{2t+O(1)}\( O(1/t^2) \bsalpha\otimes\bsalpha + \sqrt{3}(\bsalpha\otimes \bsbeta-\bsbeta\otimes \bsalpha) + 4\bsbeta \otimes \bsbeta\).
\end{eqnarray}
Furthermore there exist $c, \bar \lambda, \lambda'>0$ such that for all permissible $G_0\in L^1_6(\R^\di)$ we have that
\begin{equation} \label{alges6}
\int_{\R^\di} \left| (G_t-G^M)(\bsp) \right| \,|\bsp|^6 \, \dd \bsp=O(1+t^{\lambda'})
\end{equation}
and for an open dense set of initial data $G_0\in L^1_6(\R^\di)$ there exists $c>0$ such that for sufficiently large $t$
\begin{equation} \label{alges}
\left|\int_{\R^\di} (G_t-G^M)(\bsp) \,|\bsp|^4 \, \dd \bsp \right|\, > c t^{-\bar \lambda}.
\end{equation}
\end{proposition}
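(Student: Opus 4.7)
The plan is to reduce the entire statement to a finite-dimensional nonlinear ODE for the second-moment tensor of $g_t$, together with moment ODEs of orders four and six for the genericity analysis.

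\emph{Step 1.} Multiplying \eqref{sred} by $\bsw\otimes\bsw/2$ and integrating by parts yields a closed evolution for $T = \tfrac{1}{2}\int g_t\,\bsw\otimes\bsw\,\dd\bsw$,
$$\dot T = \Id - 2\theta^{-1} T - \mu\bigl(\bsalpha\otimes T\bsbeta + T\bsbeta\otimes\bsalpha\bigr),\qquad \theta = \tr T,$$
which decouples from higher moments because the drift in \eqref{sred} is linear in $\bsw$ and the diffusion is constant coefficient. In the orthonormal basis $(\bsalpha,\bsbeta)$ this is a three-dimensional nonlinear system for $a = \bsalpha\cdot T\bsalpha$, $b = \bsbeta\cdot T\bsbeta$, $c = \bsalpha\cdot T\bsbeta$; positive-definiteness of $T$ is preserved along the flow.

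\emph{Step 2.} A formal bootstrap then gives the claimed asymptotics: $\dot b \approx 1$ hence $b \sim t$; then $\dot c \approx -\mu b$ hence $c \sim -\mu t^2/2$; then $\dot a \approx -2\mu c$ hence $a \sim \mu^2 t^3/3$. The dissipation $\theta^{-1} \sim 3/(\mu^2 t^3)$ is then consistent with being a subleading perturbation, which establishes \eqref{eq:thetainverse}. I would make the bootstrap rigorous by passing to rescaled coordinates $(a/t^3, b/t, c/t^2)$ and exhibiting an attracting fixed point $(\mu^2/3, 1, -\mu/2)$ of the rescaled system. The explicit $2\times 2$ inversion of $T$ produces \eqref{eq:Tinverse}, and the identity $M^{1/2} = (M + \sqrt{\det M}\,\Id)/\sqrt{\tr M + 2\sqrt{\det M}}$ applied to $M = T^{-1}$ yields $\eta_t = T^{-1/2}$ and hence \eqref{sixstar}. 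Differentiating $\eta_t$ and substituting into $\A_t = (\dot\eta_t - \mu\eta_t\,\bsalpha\otimes\bsbeta)\eta_t^{-1}$, together with the symmetry of $\eta_t^{-1}$ (so that $\eta_t(\bsalpha\otimes\bsbeta)\eta_t^{-1} = (\eta_t\bsalpha)\otimes(\eta_t^{-1}\bsbeta)$), produces \eqref{eq:coeff} after tracking both leading and first subleading terms.

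\emph{Step 3.} For the sixth-moment bound \eqref{alges6} I would compute $\tfrac{\dd}{\dd t}\int G_t|\bsp|^6\,\dd\bsp$ from \eqref{eq:shape} by integration by parts. By Step 2 both $\theta^{-1}\Id - \A_t$ and $\eta_t^2$ have entries of order $1/t$, so $M_6(t) = \int G_t|\bsp|^6\,\dd\bsp$ satisfies a linear differential inequality $\dot M_6 \leq (C/t)\,M_6 + Q(t)$, where $Q$ is a polynomial expression in lower-order moments that themselves grow at most polynomially and in the constant contribution from $\eta_t^2 \nabla^2|\bsp|^6$. A Gronwall argument then gives $M_6 = \cO(t^{\lambda'})$.

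\emph{Step 4.} For the lower bound \eqref{alges} I would derive the ODE for $\Delta_4(t) = \int(G_t - G^M)|\bsp|^4\,\dd\bsp$. Its linear part has coefficient $-\gamma(t)$ of order $1/t$, inherited from the $\bsbeta\otimes\bsbeta$ entry of $\A_t$ in \eqref{eq:coeff}; the inhomogeneity contains contributions from the shear and from the sixth-moment coupling, each decaying algebraically. Variation of constants then yields $|\Delta_4(t)| \geq c\,t^{-\bar\lambda}$ unless the initial data satisfy one specific scalar cancellation condition. Since this condition defines a closed proper affine subspace of the admissible initial data in $L^1_6$, its complement is open and dense, which gives the claim.

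\emph{Main obstacle.} The delicate point is the rigorous error control in Step 2: the actual three-dimensional trajectory must be trapped in an $o(1)$ relative tube around the polynomial ansatz uniformly in $t$, and enough subleading information must be propagated to produce the explicit $1/t$ coefficients in \eqref{eq:coeff}. A closely related difficulty is in Step 4: identifying the exact cancellation manifold on which the forcing is depleted to higher order, and proving that away from it the lower bound holds with a uniform constant, requires precise tracking of the coupling between the fourth and sixth moments and cannot be inferred from the hypocoercivity arguments of the upper-bound part alone.
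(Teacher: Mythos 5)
Your Steps 1 and 2 follow the same route as the paper: derive the closed Riccati-type ODE for the stress tensor $T$ from \eqref{sred}, identify the polynomial scales $a\sim t^3$, $b\sim t$, $c\sim t^2$, and make this rigorous by passing to normalised variables. (The paper uses the time substitution $t=\exp(s)$ to turn the rescaled system into a constant linear part $M$ with spectrum $\{-1,-2,-3\}$ plus integrable error, rather than locating an attracting fixed point for $(a/t^3,b/t,c/t^2)$ directly, but these are two presentations of the same argument, and the paper does also need to bootstrap once to get the $O(1)$ corrections that appear in \eqref{sixstar}--\eqref{eq:coeff}.) Your Step 3, controlling sixth moments by a Gronwall inequality with $O(1/t)$ coefficients, is again the paper's argument; the paper works with $h_{ij}=\int(G_t-G^M)p_1^ip_2^j$ rather than $\int G_t|\bsp|^6$, but since $G_t\ge 0$ both give \eqref{alges6}.

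Step 4, however, has a genuine gap. The scalar quantity $\Delta_4(t)=\int(G_t-G^M)|\bsp|^4\dd\bsp = h_{40}+2h_{22}+h_{04}$ does \emph{not} satisfy a closed scalar ODE. From \eqref{eq:hode} one finds, using $h_{kl}=0$ for $k+l\le 2$,
\begin{align*}
\dot h_{40}&= -\Bigl(\tfrac{4}{\theta}-4\A_{11}\Bigr)h_{40}+4\A_{21}h_{31},\\
\dot h_{22}&= -\Bigl(\tfrac{4}{\theta}-2\A_{11}-2\A_{22}\Bigr)h_{22}+2\A_{12}h_{31}+2\A_{21}h_{13},\\
\dot h_{04}&= -\Bigl(\tfrac{4}{\theta}-4\A_{22}\Bigr)h_{04}+4\A_{12}h_{13},
\end{align*}
and since $\A_{11}=O(t^{-3})$ while $\A_{22}\sim -2/t$, the diagonal coefficients are \emph{not} equal, and there is genuine coupling to $h_{31},h_{13}$ through the shear entries $\A_{12},\A_{21}$ at leading order $1/t$. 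Treating these as an inhomogeneity of known algebraic decay is circular: their decay rate is exactly what the argument is supposed to establish. The paper instead carries the full five-component vector $(h_{40},h_{31},h_{22},h_{13},h_{04})$, writes $\dot h = t^{-1}(N+O(t^{-1}))h$, and checks via Routh--Hurwitz that the $5\times 5$ constant block of $N$ has spectrum in the open left half-plane; the lower bound then comes from $u(s)=\exp(-Ns)h(s)$ obeying an integrable perturbation of $\dot u =0$. Relatedly, your mention of ``sixth-moment coupling'' in the fourth-moment equation is mistaken: the moment hierarchy from \eqref{eq:shape} is lower-triangular (order-$k$ moments couple only to orders $\le k$), so no sixth-moment term appears. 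Finally, the bad set of initial data is not a codimension-one affine hyperplane defined by one scalar cancellation; in the paper it is the set of $G_0$ whose rescaled fourth-moment tensor agrees with that of $G^M$, i.e.\ the vanishing of the whole five-vector $(h_{ij}(0))_{i+j=4}$. This is still a closed proper subspace so the ``open dense'' conclusion survives, but the structure is different from what you describe.
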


\begin{proof}
We will first establish formulae~\eqref{sixstar}, \eqref{eq:Tinverse}, \eqref{eq:thetainverse} and \eqref{eq:coeff}
by carefully analyzing the second moments. Formulae~\eqref{alges6} and \eqref{alges} follow from cruder estimates of higher moments.
\subsubsection*{Second moments}
The stress tensor
$T=\frac{1}{2}\int g_t(\bsw)\,\bsw \otimes \bsw\,\dd \bsw$ satisfies a simple ordinary differential equation.
Multiplying \eqref{sred} with $\frac{1}2\bsw \otimes \bsw$ and integrating by parts yields
\begin{eqnarray} \label{Meq}
\frac{\dd T}{\dd t} = \Id - \mu\,\(\bsalpha \otimes \bsbeta\, T+ T\,\bsbeta\otimes \bsalpha\)- \frac{2}{\tr T}T.
\end{eqnarray}
To characterize the asymptotic behavior of $T$ as $t \to \infty$ we define the
rescaled moments $a,b,c$ by the requirement
\begin{eqnarray*}
T = t^{3} \,a\,\bsalpha\otimes \bsalpha + t^{2}\,b\,(\bsbeta \otimes \bsalpha+\bsalpha\otimes \bsbeta)+t\,c\, \bsbeta \otimes \bsbeta.
\end{eqnarray*}
Then \eqref{Meq} reads
\begin{eqnarray*}
&&3\,t^{2} \,a\,\bsalpha\otimes \bsalpha + 2\,t\,b\,(\bsbeta \otimes \bsalpha+\bsalpha\otimes \bsbeta)+c\, \bsbeta \otimes \bsbeta +  t^{3} \,\frac{\dd a}{\dd t}\,\bsalpha\otimes \bsalpha + t^{2}\,\frac{\dd b}{\dd t}\,(\bsbeta \otimes \bsalpha+\bsalpha\otimes \bsbeta)\\&&+t\,\frac{\dd c}{\dd t}\, \bsbeta \otimes \bsbeta\\
&=&  \bsalpha\otimes \bsalpha + \bsbeta\otimes \bsbeta- \mu\,\(2t^2 b\,\bsalpha\otimes \bsalpha + tc\,(\bsalpha\otimes \bsbeta+\bsbeta\otimes \bsalpha)\)\\
&&- \frac{2}{a\,t^3+c\,t}\(t^{3} \,a\,\bsalpha\otimes \bsalpha + t^{2}\,b\,(\bsbeta \otimes \bsalpha+\bsalpha\otimes \bsbeta)+t\,c\, \bsbeta \otimes \bsbeta\)
\end{eqnarray*}
The equations for the individual components read
\begin{eqnarray*}
\bsalpha \otimes \bsalpha: && 0 = 3t^2a+t^3 \frac{\dd a}{\dd t} -1+2t^2\mu b+\frac{2aet^3}{at^3+ct},\\
\bsalpha \otimes \bsbeta: && 0 = 2tb + t^2\frac{\dd b}{\dd t}+\mu t c+\frac{2t^2 b}{a t^3+ct},\\
\bsbeta \otimes \bsbeta: && 0 = c + t\frac{\dd c}{\dd t}-1+\frac{2ct}{at^3+ct}.
\end{eqnarray*}
After rescaling time as well so that $t = \exp(s)$ and $\frac{\dd}{\dd t} = \exp(-s) \frac{\dd}{\dd s}$ equation \eqref{Meq} takes the form
%
\begin{eqnarray*}
\left(\frac{\dd}{\dd s}-M\right) \left(\begin{array}{c}a\\b\\c\end{array}\right) =
\left(\begin{array}{c}\exp(-2s)\\0\\1\end{array}\right)+ \frac{2}{\exp(2s)\,a +c}\left(\begin{array}{c}a\\b\\c
\end{array}\right)
\end{eqnarray*}
with
$$ M=\left(\begin{array}{rrr}-3 &-2\mu& 0\\
0 &-2 &-\mu\\
0 & 0 &-1  \end{array}\right).$$

As the spectrum of $M$ is given by $\lambda_1 = -1$, $\lambda_2 = -2$, $\lambda_3 = -3$ with corresponding eigenvectors $\bsv_1 = (1, 0, 0), \bsv_2 = (-2\mu, 1, 0), \bsv_3 = (\mu^2,-\mu,1)$
a simple application of the variation of constants formula delivers the asymptotic result
$$ \left(\begin{array}{c} a\\b\\c\end{array}\right) = M^{-1}\left(\begin{array}{c} 0\\0\\1\end{array}\right)+ O(\exp(-s)) = \left(\begin{array}{c} \frac{1}{3} \mu^2\\
-\frac{1}{2}\mu\\ 1\end{array}\right) +O(t^{-1}).$$
This implies that the stress tensor admits the asymptotic result
\begin{equation}
\label{Tassymp}
T = (t+O(1))\,T_\infty, \quad t \to \infty
\end{equation}
where
$$ T_\infty =
\frac{1}{3}(t\mu)^2 \bsalpha\otimes \bsalpha
-\frac{1}{2}t \mu\, (\bsalpha\otimes \bsbeta +\bsbeta\otimes \bsalpha)
+ \bsbeta \otimes \bsbeta.$$
We can bootstrap this step by plugging \eqref{Tassymp} into \eqref{Meq}. This shows that
the function
$t \to T - t\, T_\infty$ is differentiable and satisfies
\begin{equation}
\frac{\dd}{\dd t}(T-t\, T_\infty) = O(t^{-1}), \quad t \to \infty.
\end{equation}
These asymptotics give results for $\eta_t$, $\eta_t^{-1}$, $\dot \eta_t$ and $T^{-1}$.
Then as $\theta^{-1} = (\tr T)^{-1}$, this implies \eqref{eq:Tinverse} and one also has the asymptotic result for $\A$.

\subsubsection*{Fourth and sixth moments}
We are now assuming that the related initial data for $G_0$ satisfy $g_0\in L^1_6(\R^\di)$ and using \eqref{eq:fokkmild} we can see that higher moments up to order $6$ are well-defined for finite times, these moments satisfy similar ODEs. These will later allow us to choose suitable initial data for lower estimates on the rate of decay.

Letting
\begin{equation}\label{eqn:hij}
  h_{ij}(t)= \int_{\R^\di} (G_t(\bsp)- G^M(\bsp))p_1^i p_2^j \, \dd \bsp,
\end{equation}
we obtain ordinary differential equations, which only depend on modes of the same or lower order. The moment of order $0$ and $1$ (mass and momentum) are preserved by  Proposition~\ref{fpcons}
for the evolution of $g$, in the same way this also follows for $G$, where the momentum is assumed to be $0$, i.e
\begin{equation}\label{eq:h0}
  h_{00}(t)\equiv 0=h_{10}(t)\equiv h_{01}(t)\equiv 0
\end{equation}
The rescaling $\eta_t$ is defined such that \eqref{covG} holds, i.e.
\begin{equation}\label{eq:h2} h_{20}(t)\equiv h_{02}(t) \equiv h_{11}(t)\equiv 0.
\end{equation}
For the higher moments $h_{ij}$ with $i+j>2$ we obtain using integration by parts
\begin{align}
\label{eq:hode}
  \frac{\dd}{\dd t}{h_{ij}}(t)&= -\bigl( \frac{i+j}\theta -i  \A_{11}-j\A_{22} \bigr)\, h_{ij} +j\A_{12}\, h_{i+1 \, j-1}+i\A_{21} \, h_{i-1 \, j+1}  \\
&\nonumber +  i (i-1) (\eta_t^2)_{11}\, h_{i-2 \,j}  + ij (  (\eta_t^2)_{12} + (\eta_t^2)_{21}) \,h_{i-1 \, j-1} +j (j-1) (\eta_t^2)_{22} \, h_{i \,j-2},
\end{align}
where we assumed without loss of generality that $\bsalpha=(1,0)$ and $\bsbeta = (0,1)$.
Using the information on the coefficients in Proposition~\ref{prop:star} we write
the moment equations in matrix notation:
\begin{align}
\label{eq:hode1}
  \frac{\dd}{\dd t}{h_{ij}}(t)&= t^{-1}\sum_{k,l}(N_{ijkl} +O(t^{-1}))\,h_{kl(t)} \quad t\gg 1,
\end{align}
where the operator $N$ is defined by
\begin{align*}
N_{ij kl} &= \left\{\begin{array}{rl} -2j & \text{ if }(i,j) =(k,l),\\
-\frac{\sqrt{3}}{2}j & \text{ if }  (i,j) =(k+1,l-1),\\
\frac{\sqrt{3}}{2} i& \text{ if } (i,j) =(k-1,l+1),\\
2j(j+1)& \text{ if } (i,j) = (k,l+2),\\
0 & \text{ else}.
\end{array}\right.
\end{align*}
with the convention that $h_{ij} =0$ if $i+   j    \leq 2$. The moments of odd order can all be chosen to be $0$, which is preserved by \eqref{eq:hode}.
Now rescaling time $t=\exp(s)$ and $\frac{\dd}{\dd t}=\exp(-s) \frac{\dd}{\dd s}$ equation \eqref{eq:hode1} becomes
\begin{equation}\label{eq:4res}
  \left( \frac{\dd}{\dd s} -N+ O(e^{-s})\right) h  =0  \quad s\gg 1,
\end{equation}
where the operator $N$ if a lower triangular form. Consider now the truncated operator
\[ (N_{ijkl})_{i+j=k+l=4} =\left( \begin {array}{ccccc} 0&2\,\sqrt {3}&0&0&0
\\ \noalign{\medskip}-\sqrt {3}/2&-2&3\,\sqrt {3}/2&0&0
\\ \noalign{\medskip}0&-\sqrt {3}&-4&\sqrt {3}&0\\ \noalign{\medskip}0
&0&-3\,\sqrt {3}/2&-6&\sqrt{3}/2\\
\noalign{\medskip}0&0&0&-2\,
\sqrt {3}&-8\end {array} \right). \]
It has only eigenvalues  with negative real parts by the Routh-Hurwitz stability criterion. Using the variation of constants formula we obtain
 \begin{equation}\label{eq:4ass}
  (h_{ij})_{i+j=4}
 =O(\exp(-\bar \lambda s))
 \end{equation}
for some    $\tilde \lambda>0$.  Letting $u(s)=\exp(-Ns) h(s)$ changes \eqref{eq:4res} into
\[ \left( \frac{\dd}{\dd s} + O(e^{-s})\right) u  =0.  \]
As the $O(e^{-s})$ term is integrable, bounded initial data $u(0)$ will remain bounded in norm from above and below for all times $s>0$. Hence
 there is some $\bar \lambda \geq \tilde{\lambda}$
\begin{equation}\label{eq:h4lower}
  \liminf_{s\to \infty}\exp(\bar \lambda s)\,
  \left|
  (h_{ij})_{i+j=4}\right|
  > 0
\end{equation}
   for all nonzero initial data in \eqref{eq:hode}. All initial data $G_0 \in L^1_6(\R^2)$ with a different tensor of fourth order moments --after the coordinate change $\eta_0$-- compared to $G^M$  will have then have nonzero initial data in \eqref{eq:hode}, this set is open and dense in the set of possible initial data in $L^1_6(\R^2)$.    Transferring this back to time
$t$ gives the algebraic estimate \eqref{alges} for some $c>0$ which depends on the initial value and all $t$ large enough.

A less detailed calculation for the vector $h$ of sixth moments gives then
\[ \frac{\dd}{\dd t} h = N^{(6)}(t) h + O(\frac{1}{1+ t^\alpha}), \]
where $\alpha>1$ and $\|N^{(6)}(t)\| = O(\frac{1}{1+ t})$ as $t \to \infty$. After transforming to time $s$ as above we obtain a constant matrix plus some exponentially small error terms, this is enough using Gronwall's inequality to conclude that  $h$  grows at most exponentially in $s$ and after transforming  back to time $t$  that $h$ grows  at most algebraically with rate $t^{\lambda'}$ such that --without loss of generality-- $\lambda'\geq 0$, this then yields \eqref{alges6}.
\end{proof}


\subsection{Asymptotics of shape equation and hypocoercivity}\label{sec:shape}

With the asymptotic information on the coefficients we can study the shape equation~\eqref{eq:shape}.

\begin{lemma} \label{Mwellfp}
The Maxwellian $G^M(\bsp)= \frac{1}{4\pi}\exp(-\frac{1}{4} |\bsp|^2)$ is a stationary solution to~\eqref{eq:shape}.
\end{lemma}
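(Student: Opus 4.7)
The plan is to verify the claim by a direct computation. First, I would note that $\nabla G^M = -\tfrac{1}{2}\bsp\, G^M$, so the flux inside the divergence in \eqref{eq:shape} collapses to $G^M B_t \bsp$, where $B_t := \theta_t^{-1}\Id - \A_t - \tfrac{1}{2}\eta_t^2$. Taking the divergence and using $\nabla G^M\cdot B_t \bsp = -\tfrac12 G^M\, \bsp^\top B_t \bsp$, the right-hand side of \eqref{eq:shape} reduces to
\[
G^M\left(\tr B_t - \tfrac12 \bsp^\top (B_t)_s\, \bsp\right),
\]
where $(B_t)_s$ denotes the symmetric part of $B_t$. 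Since $\tr B_t = \tr (B_t)_s$, this expression vanishes for every $\bsp$ if and only if $(B_t)_s = 0$, which is the only thing left to prove.

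The heart of the argument is to show that $(\A_t)_s = \theta_t^{-1}\Id - \tfrac12\eta_t^2$. I would exploit the symmetry of $\eta_t$ together with the identity $T_t = \eta_t^{-2}$: differentiating yields
\[
\dot T_t = -\eta_t^{-1}\dot\eta_t\,\eta_t^{-2} - \eta_t^{-2}\,\dot\eta_t\,\eta_t^{-1},
\]
which rearranges to the crucial identity $\dot\eta_t\eta_t^{-1} + \eta_t^{-1}\dot\eta_t = -\eta_t\,\dot T_t\,\eta_t$. Combined with $\A_t + \A_t^\top = \dot\eta_t\eta_t^{-1} + \eta_t^{-1}\dot\eta_t - \mu\bigl(\eta_t\bsalpha\otimes\bsbeta\,\eta_t^{-1} + \eta_t^{-1}\bsbeta\otimes\bsalpha\,\eta_t\bigr)$, it remains to substitute the moment ODE \eqref{Meq} for $\dot T_t$. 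Using the commutation $T_t\eta_t = \eta_t T_t = \eta_t^{-1}$, the conjugation $\eta_t(\bsalpha\otimes\bsbeta\, T_t + T_t\,\bsbeta\otimes\bsalpha)\eta_t$ reduces to $\eta_t\bsalpha\otimes\bsbeta\,\eta_t^{-1} + \eta_t^{-1}\bsbeta\otimes\bsalpha\,\eta_t$, and these shear contributions cancel exactly with the explicit shear term in $\A_t + \A_t^\top$. What remains is $\A_t + \A_t^\top = 2\theta_t^{-1}\Id - \eta_t^2$, giving $(B_t)_s = 0$ and completing the verification.

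I do not anticipate any genuine obstacle here; the computation is essentially algebraic once $\eta_t$ is recognised as being determined by the second-moment constraint. The only subtlety is the non-commutativity between $\eta_t$ and $\dot\eta_t$, for which the identity $\dot\eta_t\eta_t^{-1} + \eta_t^{-1}\dot\eta_t = -\eta_t\,\dot T_t\,\eta_t$ is the right bookkeeping device. Conceptually, $(B_t)_s = 0$ is precisely the condition that the normalisation \eqref{covG} is preserved in time by a $G$ whose second moment equals that of $G^M$, so the lemma is a consistency check: the rescaling $\eta_t$ was constructed for exactly this purpose.
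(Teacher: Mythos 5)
Your proof is correct, and the first half is the same reduction the paper makes: with $\nabla G^M = -\tfrac12\bsp\,G^M$ the stationarity of $G^M$ reduces to $(B_t)_s = 0$ where $B_t = \theta_t^{-1}\Id - \A_t - \tfrac12\eta_t^2$, i.e.\ to the identity $\A_t + \A_t^* + \eta_t^2 = 2\theta_t^{-1}\Id$, which is precisely \eqref{resmeq2} in the paper. Where you genuinely diverge is in how you derive that identity. The paper multiplies the rescaled shape equation \eqref{eq:shape} by $\tfrac12\bsp\otimes\bsp$, integrates by parts, and invokes the normalisation \eqref{covG} that the covariance of $G$ is held fixed; this gives \eqref{resmeq2} in one line without ever leaving the rescaled picture. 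You instead work in the unrescaled variables: you differentiate $T_t = \eta_t^{-2}$ to get $\dot\eta_t\eta_t^{-1} + \eta_t^{-1}\dot\eta_t = -\eta_t\dot T_t\eta_t$, substitute the unrescaled second-moment ODE \eqref{Meq} for $\dot T_t$, and then watch the conjugated shear terms cancel against the explicit shear part of $\A_t$. This is more verbose and requires tracking the non-commutativity of $\eta_t$ and $\dot\eta_t$, but it makes the cancellation mechanism fully explicit and shows concretely that the identity is nothing but \eqref{Meq} rewritten under $\eta_t = T_t^{-1/2}$. Your closing remark is also the right way to think about it: \eqref{resmeq2} is exactly the constraint that the normalisation \eqref{covG} be propagated, so the lemma is a consistency check on the construction of $\eta_t$.
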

\begin{proof}

Substituting $G^M$ into \eqref{eq:shape} and observing that $\nabla G^M(\bsp) = -\frac{1}{2}G^M(\bsp)\,\bsp$	
one finds that $G^M$ is stationary if and only if
\begin{eqnarray} \label{GMstatc}
\tr\(\theta^{-1}\Id - \A -\frac{1}{2}\eta_t^2\)\,G^M(\bsp)
-\frac{1}{2}\bsp\cdot \(\theta^{-1}\Id-\frac{1}{2}(\A+\A^*)-\frac{1}{2}\eta_t^2\)\bsp\,G^M(\bsp) = 0.
\end{eqnarray}
Next, recall that by \eqref{covG} the covariance matrix of $G$ is constant, i.e. $\frac{\dd}{\dd t} \int \bsp \otimes \bsp \, G(\bsp) \, \dd \bsp = 0$. Hence, after
multiplication of \eqref{eq:shape} with $\frac{1}{2}\bsp\otimes \bsp$ and integration by parts one obtains that
\begin{eqnarray}
\A+\A^* + \eta_t^2 &=&2\,\theta^{-1} \Id.
\label{resmeq2}
\end{eqnarray}
Clearly both terms on the left-hand side \eqref{GMstatc} vanish thanks to \eqref{resmeq2}.
\end{proof}
%

Although Lemma~\ref{Mwellfp} provides a candidate for the attractor of the evolution it is not
obvious that $\lim_{t\to \infty} G_t = G^M$ holds (in any norm). The reason is that by \eqref{sixstar} the coercivity constant of the dissipation operator $\nabla \cdot\eta_t^2 \nabla$ diverges as $t\to \infty$. To show the convergence we use a nonautonomous perturbation to the theory of hypocoercivity \cite{VillHyp} combined with a priori estimates for the full equations.

It is advantageous to rescale time by defining
$$ \tilde G_s = G_{\exp(s)}.$$
As $t = \exp s$ and $\frac{\dd t}{\dd s} = t$ the density $\tilde G$ satisfies the rescaled equation
\begin{equation} \label{resGeq}
t \,\partial_t G = \partial_s \tilde G = t \,\nabla\cdot\(\tilde G(\bsp)\,(\theta^{-1} \Id - \A)\bsp + \eta_t^2\nabla \tilde G\).
\end{equation}
The coefficients in \eqref{resGeq} are controlled by proposition~\ref{prop:star}.
Next we rewrite \eqref{resGeq} as a density with respect to $G^M$, then we obtain for $G_t = u_t\,G^M$,
\begin{equation}\label{eq:u}
  \partial_s u =\nabla_\bsp \cdot (T^{-1}\nabla u)+   \nabla u \cdot \(\theta^{-1}\Id - \A -T^{-1}\)\bsp
\end{equation}
We split the last equation into an autonomous main part using \eqref{eq:Tinverse},\eqref{eq:thetainverse} and \eqref{eq:coeff} that provide bounds on  some decaying perturbation.
\begin{eqnarray} \label{asspde}
\partial_s u&=&4 \tr (\bsbeta \otimes \bsbeta \nabla^2 u)+\nabla u\cdot \(\frac{\sqrt{3}}{2}\(\bsalpha \otimes \bsbeta- \bsbeta \otimes \bsalpha\) -2  \bsbeta \otimes \bsbeta\)\bsp
\\ \nonumber &&+ \exp(-s) \Bigl\{
\nabla u\cdot \(C_1 \bsalpha \otimes \bsbeta+C_2 \bsbeta \otimes \bsalpha + C_3 \bsbeta \otimes \bsbeta +C_4    \bsalpha \otimes \bsalpha \)\bsp\\ \nonumber &&
+\(C_5\(\bsalpha \otimes \bsbeta+\bsbeta \otimes \bsalpha\) + C_6 \bsbeta \otimes \bsbeta +C_7  \exp(-s)  \bsalpha \otimes \bsalpha \)\nabla^2 u)
 \Bigr\}
\end{eqnarray}
for some appropriate uniformly bounded non-autonomous coefficients $C_i$ for $ i=0,\ldots,7$.

For the long-term convergence of solution of the shape equations we use  Villani's concept of Hypocoercivity \cite{VillHyp}. Consider a separable Hilbert space $\mathcal H$ with inner product $\langle\cdot, \cdot \rangle$, which will be $L^2(\R^\di, \dd G^M)$ in our case. Let $A=(A_1, \ldots,A_m)$ be an unbounded operator for some $m \in \N$ with domain $D(A)$ and let $B$ be an unbounded antisymmetric operator with domain $D(B)$. The theory reduces the convergence to equilibrium of the nonsymmetric operator $L=A^*A+B$, which is not coercive in our case, to the study of the symmetric operator $A^*A+C^*C$ using the commutator $C=[A,B]$. Under appropriate conditions this operator is coercive, which then implies convergence in the abstract Sobolev space $\mathcal H^1$ with norm
$\|h\|_{\mathcal{H}^1}^2 =\langle h,h \rangle +  \langle Ah,Ah \rangle+ \langle Ch,Ch \rangle$, in our case this will coincide with $H^1(\R^\di,\dd G^M)$. The simplest form of the theory is enough for our example and it is stated next.
\begin{theorem}\cite[Theorem 18]{VillHyp}\label{thm:hypoco} With the above notation, consider a linear operator $L=A^*A+B$ with $B$ antisymmetric, and define the commutator $C:=[A,B]$. Assume the existence of constants $\alpha, \beta$ such that
\begin{enumerate}
  \item $A$ and $A^*$ commute with $C$; $A_i$ commutes with each $A_j$;
  \item $[A,A^*]$ is $\alpha$-bounded relatively to $I$ and $A$;
  \item $[B,C]$ is $\beta$-bounded relatively to $A,A^2,C$ and $AC$
\end{enumerate}
Then there is a scalar product $\langle\langle\cdot, \cdot \rangle\rangle$ on $H^1(\R^\di,\dd G^M)/\mathcal{K}$, which defines a norm equivalent to the $H^1$ norm, such that
\begin{equation}\label{eq:hyp}
  \forall h \in H^1/\mathcal{K}, \quad \langle\langle h,Lh \rangle\rangle \geq K(\|Ah\|^2 +\|Ch\|^2)
\end{equation}
for some constant $K>0$ depending on $\alpha$ and $\beta$. If in addition
\[ A^* A +C^* C \mbox{ is $\kappa$ -coercive}\]
for some $\kappa>0$, then there exists a constant $\lambda>0$, such that
\[ \forall h \in H^1/\mathcal{K}, \quad \langle\langle h,Lh \rangle\rangle \geq \lambda \langle\langle h,h \rangle\rangle.
 \]
In particular, $L$ is hypocoercive in $H^1/\mathcal{K}$, there is a $c <\infty$
\[\| \exp(-tL)\|_{H^1/\mathcal{K} \to H^1/\mathcal{K}} \leq c \exp(-\lambda t), \]
where both $\lambda $ and $c$ only depend on upper bounds for $\alpha$ and $\beta$ and lower bounds on $\kappa$.
\end{theorem}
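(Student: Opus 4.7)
The plan is to follow the classical hypocoercivity strategy of constructing a twisted Lyapunov functional on $H^1/\mathcal{K}$ that is equivalent to the standard $H^1$ inner product and decays exponentially along the semigroup generated by $-L$. Concretely, I would define
\[
\langle\langle h,h\rangle\rangle := \|h\|^2 + a\|Ah\|^2 + 2b\,\langle Ah,Ch\rangle + c\|Ch\|^2
\]
for positive constants $a,b,c$ with $b^2<ac$, so that by Cauchy--Schwarz this quadratic form is strictly positive on $H^1/\mathcal{K}$ and comparable to the standard $H^1$ norm modulo $\mathcal{K}$. The magnitudes of $a,b,c$ will be tuned at the end.

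The second step is to differentiate $\langle\langle h_t,h_t\rangle\rangle$ along $\partial_t h_t = -L h_t$. The diagonal contributions produce $-2\|Ah\|^2 - 2a\|A^2 h\|^2 - 2c\|ACh\|^2$ plus error terms, using antisymmetry of $B$ and the mutual commutation of the $A_i$ in hypothesis (i). The off-diagonal cross term requires expanding the commutators $[A,L]$ and $[C,L]$. By (i), $[A,L] = [A,A^*]A + [A,B]$, and $[C,L] = [C,A^*]A + A^*[C,A] + [C,B]$ collapses substantially since $A$ and $A^*$ commute with $C$. The crucial contribution is $\langle Ah,[C,B]h\rangle$, whose sign, combined with the identity $C=[A,B]$, yields an effective gain of order $-b\|Ch\|^2$; this is the mechanism through which coercivity in the $C$-direction is generated even though $L$ itself is not coercive.

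The main obstacle is controlling the leftover junk: $[A,A^*]$ produces $\alpha$-bounded contributions paired against $h$ and $Ah$ (hypothesis (ii)), and $[B,C]$ produces $\beta$-bounded contributions paired against $Ah, A^2 h, Ch, ACh$ (hypothesis (iii)). The standard remedy, which I would carry out explicitly, is a hierarchical choice $a=\varepsilon$, $b=\varepsilon^2$, $c=\varepsilon^3$ for a sufficiently small $\varepsilon>0$ depending only on $\alpha,\beta$. The small parameter ensures that the positive terms $a\|A^2h\|^2$ and $c\|ACh\|^2$ absorb all cross terms produced by the commutator bounds via Young's inequality, leaving
\[
\frac{\mathrm{d}}{\mathrm{d}t}\langle\langle h,h\rangle\rangle \;\le\; -K\bigl(\|Ah\|^2 + \|Ch\|^2\bigr),
\]
which is the first assertion \eqref{eq:hyp}. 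To conclude, the $\kappa$-coercivity hypothesis on $A^*A+C^*C$ gives $\|Ah\|^2+\|Ch\|^2\ge \kappa\|h\|^2$ on $H^1/\mathcal{K}$; combined with the norm equivalence of $\langle\langle\cdot,\cdot\rangle\rangle$ with the $H^1$ inner product, this upgrades the dissipation estimate to $\tfrac{\mathrm{d}}{\mathrm{d}t}\langle\langle h,h\rangle\rangle \le -\lambda \langle\langle h,h\rangle\rangle$, and Gronwall's inequality yields the exponential contraction, with $\lambda$ and $c$ depending only on upper bounds for $\alpha,\beta$ and the lower bound on $\kappa$ by construction.
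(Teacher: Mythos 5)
This statement is quoted verbatim from Villani's memoir (it is \cite[Theorem~18]{VillHyp}); the paper cites it and does not reproduce a proof, so there is no ``paper's own proof'' to compare against. Your sketch is a faithful reconstruction of the strategy Villani uses: introduce the twisted inner product $\langle\langle h,h\rangle\rangle = \|h\|^2 + a\|Ah\|^2 + 2b\langle Ah,Ch\rangle + c\|Ch\|^2$, differentiate along $\partial_t h=-Lh$, exploit hypothesis (i) to collapse $[A,L]=[A,A^*]A+C$ and $[C,L]=[C,B]$, absorb the commutator error terms via (ii)--(iii), and close with a hierarchical choice of small constants; then $\kappa$-coercivity of $A^*A+C^*C$ upgrades the dissipation to a Gronwall estimate. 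Two points should be corrected if you write out the details. First, the coercivity gain in the $C$-direction does \emph{not} come from the pairing $\langle Ah,[C,B]h\rangle$ as you state; it comes from the $C=[A,B]$ piece of $[A,L]$, which in $-2b\langle ALh,Ch\rangle$ contributes exactly $-2b\|Ch\|^2$. The term $\langle Ah,[C,B]h\rangle$ (equivalently $[B,C]$) is merely an error term that hypothesis (iii) is designed to control, not the source of the gain. Second, the scaling $a=\varepsilon$, $b=\varepsilon^2$, $c=\varepsilon^3$ gives $b^2=ac$ exactly, which only makes the restricted quadratic form in $(Ah,Ch)$ positive semidefinite; you need $b^2<ac$ strictly for equivalence with the $H^1$ norm, so take for instance $b=\tfrac12\varepsilon^2$ or otherwise open the inequality.
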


The last theorem is used to show that the leading order  of \eqref{asspde}, i.e. its autonomous part, is hypocoercive. Then we use the similar splitting
$L_s=A^*_sA_s+B_s$ for the full equation to obtain a priori estimates. Both ingredients will then combined via a Duhamel formula to provide the convergence
result for \eqref{asspde}.

\begin{proposition}\label{prop:contract}
  The autonomous part of \eqref{asspde} given by
  \begin{equation}\label{autpde}
  \partial_s u=4 \tr (\bsbeta \otimes \bsbeta \nabla^2 u)+\nabla u\cdot \(\frac{\sqrt{3}}{2}\(\bsalpha \otimes \bsbeta- \bsbeta \otimes \bsalpha\) -2  \bsbeta \otimes \bsbeta\)\bsp
  \end{equation}
  defines a contraction in time in $H^1(\R^\di,\dd G^M)/\mathcal{K}$, where $\mathcal{K}=\mbox{span}\{ 1\}$, i.e. there exist $c,\lambda >0$ such that
 \begin{equation}\label{eq:Lcontract}
  \| \exp(-sL)\|_{H^1/\mathcal{K} \rightarrow H^1/\mathcal{K}} \leq c \exp(-\lambda s).
\end{equation}
\end{proposition}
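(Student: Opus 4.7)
The plan is to cast the autonomous equation \eqref{autpde} as $\partial_s u = -Lu$ on $\mathcal{H} := L^2(\R^2, G^M\,\dd\bsp)$ with $L = A^*A + B$ and then apply Theorem~\ref{thm:hypoco}. Assuming without loss of generality that $\bsalpha = (1,0)$ and $\bsbeta = (0,1)$, I would set $A := 2\partial_{p_2}$ and $Bu := \tfrac{\sqrt{3}}{2}(p_1 \partial_{p_2} u - p_2 \partial_{p_1} u)$. Using $\partial_{p_2} G^M = -\tfrac{p_2}{2}G^M$, a short integration by parts gives $A^* = -2\partial_{p_2} + p_2$, so that $A^*A u = -4 \partial_{p_2}^2 u + 2 p_2 \partial_{p_2} u$; this reproduces precisely the dissipative part of $-L$. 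The operator $B$ is the vector field associated with a skew-symmetric $2\times 2$ matrix, and since $G^M$ is invariant under orthogonal rotations $B$ is antisymmetric on $\mathcal{H}$. The kernel of $L$ is then the stated $\mathcal{K} = \mathrm{span}\{1\}$.

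Next I would compute $C := [A,B]$ and verify the three structural hypotheses of Theorem~\ref{thm:hypoco}. A direct Leibniz calculation yields $C = -\sqrt{3}\,\partial_{p_1}$, which is the decisive point: the commutator supplies the missing dissipation in the $p_1$-direction. Condition (1) is immediate — $A$ and $C$ are constant-coefficient derivatives in orthogonal coordinates and hence commute, and $A^* - A = p_2$ depends only on $p_2$, so $[A^*, C] = 0$ as well. For condition (2) one finds $[A, A^*] = 2\,\mathrm{Id}$, which is trivially $\alpha$-bounded. For condition (3) one obtains $[B, C] = \tfrac{3}{2}\partial_{p_2} = \tfrac{3}{4} A$, so it is $\beta$-bounded by $A$ alone; none of the higher-order terms $A^2, AC$ are needed.

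It remains to establish that $A^*A + C^*C$ is $\kappa$-coercive on $H^1(\R^2, \dd G^M)/\mathcal{K}$. By construction $\langle u, (A^*A + C^*C) u \rangle_{\mathcal{H}} = \|Au\|_{\mathcal{H}}^2 + \|Cu\|_{\mathcal{H}}^2 = 4\|\partial_{p_2} u\|_{\mathcal{H}}^2 + 3\|\partial_{p_1} u\|_{\mathcal{H}}^2 \geq 3 \|\nabla u\|_{\mathcal{H}}^2$. Because $G^M$ is the centred Gaussian with covariance $2\,\mathrm{Id}$, the Gaussian Poincar\'e inequality gives $\|u - \bar u\|_{\mathcal{H}}^2 \leq 2 \|\nabla u\|_{\mathcal{H}}^2$, whence $A^*A + C^*C \geq \tfrac{3}{2}$ modulo constants. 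All hypotheses of Theorem~\ref{thm:hypoco} being verified, the theorem produces an equivalent norm on $H^1/\mathcal{K}$ and the exponential decay estimate \eqref{eq:Lcontract}. The only delicate point is algebraic bookkeeping: choosing the scalar prefactor in $A$ (here the factor $2$) so that $A^*A$ reproduces simultaneously both coefficients of the $p_2$-dissipation in \eqref{autpde}, and verifying that the skew-symmetric part $B$ has been assembled so that a single commutator with $A$ recovers dissipation in the remaining coordinate. Once this splitting is fixed, all the conditions reduce to finite-dimensional computations.
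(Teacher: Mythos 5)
Your proposal is correct and follows the paper's proof essentially verbatim: the same splitting $L = A^*A + B$ with $A = 2\partial_{p_2}$ and $B = \tfrac{\sqrt{3}}{2}(p_1\partial_{p_2}-p_2\partial_{p_1})$, the same commutator $C = -\sqrt{3}\,\partial_{p_1}$, the same verification of the three structural hypotheses of Theorem~\ref{thm:hypoco}, and the same Poincar\'e-based coercivity of $A^*A + C^*C$ on $L^2(\R^2,\dd G^M)/\mathcal{K}$. (Your sign $[B,C]=\tfrac{3}{2}\partial_{p_2}=\tfrac{3}{4}A$ is in fact the correct one; the paper records $-\tfrac{3}{2}\partial_2$, an inconsequential sign slip that does not affect relative boundedness.)
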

\begin{proof}
 Consider $L^2(\R^\di,\dd G^M)$ with inner product $\langle u,v \rangle = \int_{\R^\di} u(\bsp)v(\bsp) \,G^M(\bsp) \dd \bsp$. We are now writing \eqref{autpde} in Villani's notation
\begin{equation}\label{eq:ut-Lu}
  \partial_s u +L u= 0 \mbox{ with } L=A^*A +B \mbox{ and } B \mbox{ antisymmetric in }L^2(\R^\di,\dd G^M).
\end{equation}
Choosing coordinates and identifying the canonical basis vectors $\bse_1$  $\bse_2$  with $\bsalpha$ and $\bsbeta$ respectively, we let
\begin{equation}\label{eq:AB}
  (A u)(p_1,p_2) = 2 \partial_2 u(p_1,p_2)  \qquad (B u)(p_1,p_2) =-\frac{\sqrt{3}}{2}(p_2 \partial_1 u(p_1,p_2)  -p_1 \partial_2 u(p_1,p_2))
\end{equation}
Then we obtain the adjoint  $A^*$  of $A$ in  $L^2(\R^\di,\dd G^M)$ by integration by parts in the inner product.
\begin{equation}\label{eq:AstarB}
  (A^* u)(p_1,p_2) = -2 \partial_2 u(p_1,p_2) + p_2 u(p_1,p_2),
\end{equation}
while $B$ is antisymmetric, such that \eqref{eq:ut-Lu} is a reformulation of \eqref{autpde}. We now check the assumptions of theorem \ref{thm:hypoco}. We observe
$C:=[A,B]= -\sqrt{3}\partial_1$ and then (i) $A$ and $A^*$ commute with $C$. Furthermore (ii) holds as $[A, A*]=2 I$. The commutator $[B,C]= -\frac{3}{2}\partial_2$ is relatively bounded by $A$, hence  (iii) holds.  The general results imply then $\mathcal{K}= \mbox{Ker} L= \mbox{Ker} A \cap \mbox{Ker} B$ consists of constants only.
In addition $A*A +C*C= -4 \partial_2^2 +2p_2 \partial_2 -3 \partial_1^2$ is coercive on $L^2(\R^\di,\dd G^M)$ using  a Poincar\'e inequality as in \cite[Thm A.1]{VillHyp}. Then Theorem \ref{thm:hypoco} implies there exist positive constants  $\lambda$ and $c$ such that \eqref{eq:Lcontract} holds, completing the proof.
\end{proof}

To obtain a priori estimates, equation \eqref{asspde} is rewritten in the form of the last proposition with time-dependent operators $A_s$ and $B_s$.
\begin{equation}\label{asspde2}
  \partial_s u =-L_s u= - A^*_s A_s u-B_s u
\end{equation}
where
\begin{align}\label{As}
  A_s &= \eta_s \nabla  \\
  A^*_s. &=- \nabla \cdot (\eta_s . )+ \frac 1 2 \bsp \eta_s . \label{Astars}\\
  B_su &=-\nabla u \cdot \left( \theta^{-1}\Id - \A -\frac{1}{2}T^{-1} \right) \bsp \label{Bs}.
\end{align}
Then by \eqref{GMstatc}
\[ B^*_su= \nabla u \cdot \left( \theta^{-1}\Id - \A -\frac{1}{2}T^{-1} \right) \bsp + \frac 1 2 \bsp \left( \theta^{-1}\Id - \A -\frac{1}{2}T^{-1} \right) \bsp =-B_su,\]
such that $B_s$ is anti-symmetric.
\begin{lemma}\label{lem:apriori}
Let $u_s$ be the solution \eqref{asspde} obtained from rescaling the solution in Proposition~\ref{prop:solFP}, then there is
$K_*>0$ such the a priori estimates holds for  all $s\geq 1$.
\begin{equation}\label{eq:apriori}
  \| u_s\|_{H^1(\R^\di,G^M)} +  \| \nabla u_s \|_{H^1(\R^\di,G^M)} + \| \nabla^2 u_s \|_{H^1(\R^\di,G^M)}  \leq K_*.
\end{equation}
\end{lemma}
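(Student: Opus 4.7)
The strategy is to adapt Villani's hypocoercivity machinery (Theorem~\ref{thm:hypoco}) to the non-autonomous equation \eqref{asspde2} simultaneously at three derivative orders, treating the difference between $L_s$ and its autonomous limit $L_\infty$ (the operator analyzed in Proposition~\ref{prop:contract}) as an exponentially small perturbation in the rescaled time $s$.

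First, I would derive evolution equations for $v_k := \nabla^k u_s$, $k=0,1,2$. Commuting $\nabla^k$ past $L_s$ yields $\partial_s v_k = -L_s v_k + \mathcal{R}_{k,s} v_k$, where $\mathcal{R}_{k,s}$ collects the commutator contributions and has coefficient matrices uniformly bounded in $s$ by Proposition~\ref{prop:star}. Writing $L_s = L_\infty + (L_s - L_\infty)$, the coefficients of the remainder $L_s-L_\infty$ are of order $O(e^{-s})$ thanks to \eqref{eq:Tinverse}, \eqref{eq:thetainverse}, and \eqref{eq:coeff}; in particular the $\bsalpha\otimes\bsalpha$ component of $\eta_s^2$ and the scalar $\theta^{-1}$ are exponentially small in $s$.

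Second, at each level $k$ I introduce a modified Villani-type functional
\[
\mathcal{F}_k(v) = \|v\|^2 + a\|A_\infty v\|^2 + 2b\langle A_\infty v, C_\infty v\rangle + c\|C_\infty v\|^2,
\]
with $A_\infty = 2\partial_2$, $C_\infty = -\sqrt{3}\partial_1$ as in the proof of Proposition~\ref{prop:contract}, and positive constants $a,b,c$ chosen as in Theorem~\ref{thm:hypoco} so that $\mathcal{F}_k$ is equivalent to the squared $H^1(\R^\di,dG^M)/\mathcal{K}$ norm. Set $\mathcal{G}(s) := \sum_{k=0}^{2}\mathcal{F}_k(\nabla^k u_s)$. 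Differentiating along the evolution, Proposition~\ref{prop:contract} provides a coercive contribution $-\kappa\,\mathcal{G}(s)$ from the autonomous part $L_\infty$, while the non-autonomous perturbation $L_s - L_\infty$ and the commutator remainders $\mathcal{R}_{k,s}$ contribute at most $C e^{-s}\mathcal{G}(s)$ by Cauchy--Schwarz. Multiplications by $\bsp$ coming from $B_s$ and from the $\tfrac12\bsp\eta_s$ piece of $A_s^*$ are absorbed by the Gaussian weight $dG^M$ via integration by parts and the weighted Poincaré inequality. The resulting differential inequality
\[
\frac{d}{ds}\mathcal{G}(s) \leq \bigl(-\kappa + C e^{-s}\bigr)\mathcal{G}(s)
\]
together with Gronwall yields $\mathcal{G}(s) \leq \mathcal{G}(1)\,\exp\!\bigl(C\!\int_1^{\infty}\! e^{-s'}\,ds'\bigr)$, which translates into the claimed bound \eqref{eq:apriori}. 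Finiteness of $\mathcal{G}(1)$ comes from the smoothness of $u_s$ for $s>0$ and the weighted $L^2$ integrability guaranteed by Proposition~\ref{prop:solFP}.

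The main obstacle is the perturbation analysis in step two. Because $B_s$ is first order with polynomial coefficients and $A_s^*$ contains a linear multiplication by $\bsp\eta_s$, the commutator $[\nabla^k,L_s]$ produces derivatives of $u_s$ of order up to $k+1$ multiplied by degree-one polynomials in $\bsp$; these must be controlled by $\mathcal{G}(s)$ up to a factor $O(e^{-s})$. Here the precise asymptotics of Proposition~\ref{prop:star} are decisive: the potentially dangerous $\bsalpha\otimes\bsalpha$ direction of $\eta_s^2$ decays exponentially, while the $\bsbeta\otimes\bsbeta$ direction tends to the finite constant used in Proposition~\ref{prop:contract}. The antisymmetry of $B_s$ in $L^2(\R^\di,dG^M)$ and standard Gaussian integration-by-parts identities turn polynomial weights into derivatives and close the bootstrap across the three derivative levels, so that each $\mathcal{F}_k$ is dominated by $\mathcal{G}$ for $k=0,1,2$.
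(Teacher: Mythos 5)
Your strategy -- Villani functionals $\mathcal{F}_k(\nabla^k u)$ at each derivative level, combined into $\mathcal{G}$, coercivity of $L_\infty$, perturbation analysis, Gronwall -- differs from the paper's proof, which never invokes a hypocoercive norm inside the a priori estimate: it works directly with the unmodified weighted $L^2$ norms $\langle\nabla^k u_s,\nabla^k u_s\rangle$ for $k=0,1,2,3$ and proves only \emph{monotonicity up to an exponentially small defect}, $\partial_s\langle\nabla^k u_s,\nabla^k u_s\rangle \le C\,e^{-s}\langle\nabla^k u_s,\nabla^k u_s\rangle$, which with Gronwall gives exactly the boundedness that the lemma asserts. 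Hypocoercivity enters only afterwards, in the Duhamel step.

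The gap in your argument is the claim that ``the commutator remainders $\mathcal{R}_{k,s}$ contribute at most $Ce^{-s}\mathcal{G}(s)$.'' That is not true. Write $L_s = L_\infty + (L_s-L_\infty)$. The commutator $[\partial_i,\,L_s-L_\infty]$ is indeed $O(e^{-s})$, but $[\partial_i,\,L_\infty]$ is a first-order operator with $O(1)$ constant coefficients: from $L_\infty = -4\partial_2^2 + 2p_2\partial_2 - \tfrac{\sqrt{3}}{2}(p_2\partial_1-p_1\partial_2)$ one gets $[\partial_1,L_\infty]u = \tfrac{\sqrt3}{2}\partial_2 u$ and $[\partial_2,L_\infty]u = 2\partial_2 u - \tfrac{\sqrt3}{2}\partial_1 u$, neither of which decays in $s$. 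Your differential inequality therefore has an unavoidable $O(1)$ source term in addition to $-\kappa\mathcal{G}$ and $Ce^{-s}\mathcal{G}$, and since the hypocoercivity constant $\kappa$ from Theorem~\ref{thm:hypoco} is non-explicit and potentially small, you cannot conclude $\frac{\dd}{\dd s}\mathcal{G}\le(-\kappa+Ce^{-s})\mathcal{G}$ without further argument. The paper sidesteps this exactly where your sketch is vague: when the first-order commutator terms are paired against $\partial_i u_s$ in the \emph{unmodified} $L^2$ inner product, the antisymmetric piece $\tfrac{\sqrt3}{2}(\bsalpha\otimes\bsbeta - \bsbeta\otimes\bsalpha)$ of the limiting drift coefficient drops out because $\langle\partial_i u,\partial_j u\rangle$ is symmetric in $(i,j)$, and the $-2\bsbeta\otimes\bsbeta$ piece contributes a term with a favourable (non-positive) sign; only the genuinely non-autonomous part of $\theta^{-1}\Id - F - T^{-1}$ survives, and that one is $O(e^{-s})$. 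This cancellation-and-sign structure is what makes the argument close, and it is not preserved by passing to the mixed Villani inner product $\langle\langle\cdot,\cdot\rangle\rangle$ -- the cross term $2b\langle A_\infty v, C_\infty v\rangle$ would re-introduce contributions from the antisymmetric part that no longer vanish by symmetry. To make your route rigorous you would either have to verify that the $O(1)$ commutator terms are strictly dominated by the (non-explicit) coercivity constant, or reproduce the paper's cancellation argument inside the modified norm, neither of which you do.

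A smaller point: you assert that $[\nabla^k,L_s]$ ``produces derivatives of $u_s$ of order up to $k+1$.'' Because the second-order part of $L_s$ has coefficients independent of $\bsp$ and the first-order part has coefficients linear in $\bsp$, the commutator is in fact of order at most $k$, so the hierarchy is triangular rather than coupled upward; this does not break your argument, but it does mean the bookkeeping you describe is not the right one.
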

\begin{proof}
Using the form in \eqref{asspde2} we estimate  the time derivative of  the $L^2$ norm
\begin{align*} \partial_s\langle u_s,u_s\rangle =&-2\langle L_s u_s,u_s\rangle =2\langle-A^*_s A_s u_s+B_s u_s,u_s\rangle\\
=&2\langle A_s u_s,A_s u_s\rangle\leq 0. \end{align*}
The derivatives of $\partial_1 u$ and $\partial_2 u$ with respect to  $p_1$ and $p_2$ satisfy equations similar to \eqref{asspde2}.
\begin{align}\label{u1}
  \partial_s \partial_1 u_s &= - A^*_s A_s \partial_1 u_s-B_s \partial_1 u_s + \nabla u_s \cdot \(\theta^{-1}\Id - \A -T^{-1}\)\bse_1\\
  \partial_s \partial_2 u_s &= - A^*_s A_s \partial_2 u_s-B_s \partial_2 u_s + \nabla u_s \cdot \(\theta^{-1}\Id - \A -T^{-1}\)\bse_2 \label{u2}
\end{align}
This yields with the anti-symmetry of $B_s$
\begin{align*}
   \partial_s\langle \nabla u_s, \nabla u_s\rangle &= \partial_s \left(\langle \partial_1 u_s, \partial_1 u_s\rangle +  \langle \partial_2 u_s, \partial_2
   u_s\rangle\right)\\ &=2\langle A_s\partial_1 u_s, A_s\partial_1 u_s\rangle +2
    \langle \nabla u_s \cdot \(\theta^{-1}\Id - \A -T^{-1}\) \bse_1, \partial_1 u_s\rangle  \\
   & + 2\langle A_s\partial_2 u_s,A_s\partial_2 u_s\rangle  +
    2\langle \nabla u_s \cdot \(\theta^{-1}\Id - \A -T^{-1}\) \bse_2, \partial_2 u_s\rangle  \\
  & \leq 2\langle \nabla u_s \cdot \(\theta^{-1}\Id - \A -T^{-1}\) \bse_1, \partial_1 u_s\rangle + \langle \nabla u_s \cdot \(\theta^{-1}\Id - \A -T^{-1}\) \bse_2, \partial_2 u_s\rangle \\
  &= 2  \langle \nabla u_s \cdot \(\theta^{-1}\Id - \A -T^{-1}\),\nabla u_s \rangle \\
  & \leq C \exp(-s) \langle \nabla u_s, \nabla u_s\rangle
\end{align*}
where autonomous terms in \eqref{asspde} either cancel or have a sign, the form of the non-autonomous first-order terms yields the remainder. Then the Gronwall inequality shows that
\[ \langle \nabla u_s, \nabla u_s\rangle \leq \exp\left(C \int_1^\infty \exp(-\sigma) \dd \sigma\right) \langle \nabla u_1, \nabla u_1\rangle=  \exp\left(C/\mathrm{e}\right) \langle \nabla u_1, \nabla u_1\rangle     \]
remains bounded for all times $s>1$. A similar argument also holds for higher derivatives. We derive differential equations for higher derivatives:
\begin{align}\label{u11}
  \partial_s \partial_1^2 u_s &= - A^*_s A_s \partial^2_1 u_s-B_s \partial^2_1 u_s + 2 \nabla \partial_1 u_s \cdot \(\theta^{-1}\Id - \A -T^{-1}\)\bse_1\\
  \partial_s \partial_2 \partial_1 u_s &= - A^*_s A_s \partial_2\partial_1 u_s-B_s \partial_2\partial_1 u_s + \nabla \partial_1 u_s \cdot \(\theta^{-1}\Id - \A -T^{-1}\)\bse_2 \label{u21}\\
  & + \nabla \partial_2 u_s \cdot \(\theta^{-1}\Id - \A -T^{-1}\)\bse_1 \nonumber \\
  \partial_s \partial^2_2 u_s &= - A^*_s A_s \partial^2_2 u_s-B_s \partial^2_2 u_s + 2 \nabla \partial_2 u_s \cdot \(\theta^{-1}\Id - \A -T^{-1}\)\bse_2 \label{u22}
\end{align}
These equations yield due the properties of  $A_s$ and $B_s$
\begin{align*}
  & \frac 1 2 \partial_s \left(\langle \partial^2_1 u_s, \partial^2_1 u_s\rangle +2 \langle \partial_2 \partial_1 u_s, \partial_2 \partial_1 u_s\rangle+  \langle \partial^2_2 u_s, \partial^2_2 u_s\rangle\right)   \\
  &\leq 2 \langle \nabla \partial_1 u_s \cdot \(\theta^{-1}\Id - \A -T^{-1}\),\nabla \partial_1 u_s \rangle + 2\langle \nabla \partial_2 u_s \cdot \(\theta^{-1}\Id - \A -T^{-1}\),\nabla \partial_2 u_s \rangle\\
  & \leq C \exp(-s) \left( \langle \nabla \partial_1 u_s, \nabla \partial_1 u_s\rangle  +  \langle \nabla \partial_2  u_s, \nabla \partial_2 u_s\rangle \right),
\end{align*}
where the autonomous terms in \eqref{asspde} again either cancel or have a sign, the form of the non-autonomous first-order terms yields the remainder. Using the Gronwall inequality yields
a bound on the second derivatives after an initial regularisation, e.g. for $s>1$. Deriving similar equations for third derivatives and estimating
\[ \partial_s \left(\langle \partial^3_1 u_s, \partial^3_1 u_s\rangle +3 \langle \partial^2_2 \partial_1 u_s, \partial^2_2 \partial_1 u_s\rangle
+3 \langle \partial_2 \partial^2_1 u_s, \partial_2 \partial^2_1 u_s\rangle +  \langle \partial^3_2 u_s, \partial^3_2 u_s\rangle\right) \]
yields the final required estimate.
\end{proof}

It remains to establish the convergence of $G_t$ to $G^M$ in $L^1$. It suffices to show that $u_s \to 0$ in $L^2(\R^\di,\dd G^M)/\mbox{span}(1)$. Indeed, we show convergence of $u$ in the stronger  $H^1(\R^\di,\dd G^M)$  norm. Using the Duhamel principle for the equation
\begin{equation}\label{duh1}
  \partial_s u_s=-Lu_s -(L_s-L)u_s
\end{equation}
with $L$ as in  \eqref{eq:ut-Lu} and $L_s$ as in \eqref{asspde2}. We
starting  from the positive time $1$ for $s>1$ to guarantee uniform bounds for higher derivatives as in Lemma \ref{lem:apriori}.
\begin{align*}
  u_s & =\exp(-L_{s-1})\,u_1- \int_1^s \exp(-L_{s-\sigma})(L_\sigma-L)\,u_\sigma\, \dd \sigma
\end{align*}
Then using the error estimates of $L_\sigma-L$ in \eqref{asspde}, Lemma \ref{lem:apriori} together with the contraction
property of $L$ in $H^1(\R^\di,\dd G^M)$ as in proposition \ref{prop:contract}  yields for $s>1$
 \begin{align*}
 &\|u_s\|_{H^1(\R^\di,\dd G^M)}\\& \leq \exp(-\lambda (s-1))\, \|u_1\|_{H^1(\R^\di,\dd G^M)}
 + \int_1^t \|\exp(-L_{s-\sigma})\|_{H^1 \to H^1}\|(L_\sigma-L)u_\sigma\|_{H^1(\R^\di,\dd G^M)}\,  \dd \sigma\\
 &\leq \exp(-\lambda (s-1)) \,\|u_1\|_{H^1(\R^\di,\dd G^M)} + \int_1^t \exp(-\lambda (s-\sigma))\, C\, \exp(-\sigma)\, K^* \,\dd \sigma\\
 &\leq C s \exp(-\min\{\lambda,1\}\, s)
 \end{align*}
for some bounded $C$ only depending on the $L^1 \cap L^\infty$ norms of the initial data due to initial regularisation.
Undoing the change of time from $t$ to $s$ we also see the rate of convergence is bounded by any  algebraic order greater than $\min\{1, \lambda\}$.

\subsection{Lower estimates using higher moments}\label{subsec:lower}

The estimates on the higher moments in the last proposition yield the lower estimates \eqref{eq:ratelower} in the following way for almost all initial data.

Let $B_R$ the ball of radius $R$ in $\R^\di$,  we first note that for all $R>0$ using \eqref{alges6}
\begin{align*}\nonumber
  &\left|\int_{\R^\di} (G(\bsp)-G^M(\bsp))\,|\bsp|^4 \, \dd \bsp \right|\\
  \leq & R^4 \int_{B_R} |G(\bsp)-G^M(\bsp)| \, \dd \bsp +R^{-2} \int_{\R^\di\setminus B_R} |G(\bsp)-G^M(\bsp)| \,|\bsp|^6 \, \dd \bsp\\
 \leq & R^4 \int_{\R^\di} |G(\bsp)-G^M(\bsp)| \, \dd \bsp + \frac{C}{R^2}t^{\lambda'}.
\end{align*}
Note that $R$ may depend on $t$ in the above estimate.

Then \eqref{alges} implies with the choice $R =R(t)=t^{\bar\lambda+\lambda'/2}$ that
\begin{align} \label{eq:mom46}
  \left|\int_{\R^\di} (G_t(\bsp)-G^M(\bsp))|\bsp|^4 \, \dd \bsp \right| \geq 2 \frac{C}{R^2}t^{\lambda'}
\end{align}
for $t$ large enough as the right  hand side is    $ O(t^{-2 \bar \lambda})$.    Then we obtain that
\begin{align*}\liminf_{t\to \infty} t^{5 \bar \lambda+2 \lambda'}\|G_t-G^M\|_{L^1(\R^\di)}&=\liminf_{t\to \infty}  t^{\bar \lambda} R^4\|G_t-G^M\|_{L^1(\R^\di)} \\
 &\geq \liminf_{t \to \infty}\,t^{\bar \lambda} \left[\left|  \int_{\R^\di} (G_t(\bsp)-G^M(\bsp))\,|\bsp|^4 \, \dd \bsp\right| -C  \,R^{-2}\, t^{\lambda'}\right]\\
  &\geq \liminf_{t \to \infty}t^{\bar \lambda}\frac{1}{2}\left|  \int_{\R^\di} (G_t(\bsp)-G^M(\bsp))\,|\bsp|^4 \, \dd \bsp\right| >0,
\end{align*}
where the penultimate  estimate is due to \eqref{eq:mom46} for sufficiently large $t$.

\subsection{Summary of  the proof of Theorem \ref{FP-thm}}\label{subsec:final}

This completes the proof.
The statements on the regularity of $G$ follow from Proposition~\ref{prop:solFP}. The properties of the rescaling operator $\eta_t$ are given in proposition \ref{prop:star}. The convergence was shown at the end of subsection \ref{sec:shape}. The lower estimate \eqref{eq:ratelower} was given in subsection \ref{subsec:lower}.

\section{The Boltzmann case}
Now we consider the case where $f$ satisfies the Boltzmann equation
\begin{eqnarray}
\label{BE}
\partial_t f +\nabla_\bsz f\cdot\bsw= Q[f]  && \text{ for } \bsz, \bsw \in  \R^\di,
\end{eqnarray}
together with the    $S$-objectivity condition     \eqref{shear}.
For simplicity the collision operator $Q$ is assumed to be the hard-sphere kernel
\begin{eqnarray*}
	Q[f](v) &=& \int_{S^{1}}\int_{\R^\di}(f_*f_*'-f\,f')\;
	(v-v')\cdot \nu_+\,\dd v'\, \dd \nu
\end{eqnarray*}
We repeat the reduction steps in Section~\ref{sec:FP} and obtain the
equivalent of equation \eqref{sred}:
\begin{equation} \label{be}
\left\{ \begin{array}{rl} \partial_tg& = \mu\,\nabla\cdot(g\,\bsalpha\otimes \bsbeta \bsw )+ Q[g],\\[0.5em]
g|_{t=0} &= g_0.
\end{array} \right.
\end{equation}
where $g_t= g_t(\bsw)$ and $Q$ is unchanged (acts on $\bsw$). As before we define the kinetic energy by
$$ \theta[g] = \frac{1}{2} \int_{\R^\di} |\bsw|^2 \,g(\bsw)\,\dd \bsw.$$
The energy $\theta$ is conserved if and only if $\mu=0$. A quantitative version of this
observation delivers the existence and uniqueness of solutions for all time.
For some time-dependent transformation $\eta_t \in \R^{d\times d}$ 
we repeat the notation \eqref{renorm} and define
\begin{equation*}
\begin{cases}
\bsp =\eta_t \,\bsw,\\
G(\bsp) = \det \eta_t\, g(\eta_t^{-1} \bsp).
\end{cases}
\end{equation*}
Then the collision operator $Q$ can be written in terms of a rescaled collision operator
$$ Q_{\eta_t}[G] = \det\eta_t\,Q[g],$$
where
\begin{eqnarray*}
Q_{\eta_t}[G] &=& \int_{S^{d-1}}\int_{\R^\di}(G_*G_*'-G\,G')\;
[\nu\cdot\eta_t^{-1}(\bsp-\bsp')]_+\,\dd \bsp'\, \dd \nu,\\
\bsp_* &=& \bsp - \eta_t \nu \otimes \nu\eta_t^{-1}(\bsp-\bsp'),\\
\bsp_*'&=& \bsp' + \eta_t \nu \otimes \nu\eta_t^{-1}(\bsp-\bsp').
\end{eqnarray*}

The function $G$ satisfies the equation
\begin{equation} \label{betranc}
\partial_t G= Q_{\eta_t}[G]-\nabla_{\bsp}\cdot(G \A\bsp),
\end{equation}
with $\A=(\dot \eta_t-\mu \eta_t \bsalpha\otimes\bsbeta)\eta_t^{-1}$ as before.

Our results for the Boltzmann case are less detailed than for the Fokker-Planck case. Although it
is not know whether \eqref{betranc} admits a stationary solution we can demonstrate that
there is no stationary solution of exponential type.
\begin{theorem} \label{Boltz-thm}
Equation~\eqref{betranc} admits a global solutions if $G_0 \in L^1$, which preserve mass and
the renormalized Cauchy stress tensor
$$ T_{\eta_t} =\frac{1}{2} \int_{\R^\di} \bsp \otimes \bsp \, G\, \dd \bsp. $$
The collision invariants of $Q_{\eta_t}$ are $1$, $\eta_t^{-1}\bsp$ and $|\eta_t^{-1}\bsp|^2$. The solutions
$G_t$ do not converge to a function of exponential form $K\exp(h(\bsp))$  with $ h(\alpha \bsp) =\alpha^r h(\bsp)\, \, \forall \alpha>0, \forall \bsp \in \R^\di$ and a fixed $r>0$
as $t \to \infty$. There exists a $G_\infty \in L^1$ with $\|G_\infty\|_{L^1}=1$ and a sequence $t_j \to \infty$ as $j \to \infty$ such that $G_{t_j}$ converges weakly in $L^1$ to $G_\infty$.
\end{theorem}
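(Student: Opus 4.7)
The plan is to treat the four assertions in order. For global existence and conservation I would follow the scheme of Proposition~\ref{prop:solFP}: truncate the unbounded drift $\A\bsp$ by a cutoff $\chi_n$, treat the truncated drift as a bounded non-autonomous perturbation of the classical Boltzmann semigroup generated by $Q_{\eta_t}$, build mild solutions by iteration in $L^1\cap L^\infty$ with uniform second-moment controls, and pass to a weak limit. Mass conservation is immediate because $1$ is a classical collision invariant of $Q$ and the drift is in divergence form. For $T_{\eta_t}$, testing \eqref{betranc} against $\bsp\otimes\bsp$ and using $Q_{\eta_t}[G](\bsp)=\det\eta_t\,Q[g](\eta_t^{-1}\bsp)$ one computes $\int \bsp\otimes\bsp\,Q_{\eta_t}[G]\,\dd\bsp=\eta_t\bigl(\int\bsw\otimes\bsw\,Q[g]\,\dd\bsw\bigr)\eta_t^*$; matching this against the drift contribution $-(\A T_{\eta_t}+T_{\eta_t}\A^*)$ is precisely the identity defining $\A=(\dot\eta_t-\mu\eta_t\bsalpha\otimes\bsbeta)\eta_t^{-1}$, so $T_{\eta_t}$ is preserved. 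The collision invariants follow from the same change of variables: $\psi$ is a collision invariant of $Q_{\eta_t}$ iff $\psi\circ\eta_t^{-1}$ is a collision invariant of $Q$, which for hard spheres is $\mathrm{span}\{1,\bsw,|\bsw|^2\}$.

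The heart of the proof is the nonexistence of exponential-form limits. Assume for contradiction that $G_{t_j}\to G_\infty=K\exp(h(\bsp))$ weakly in $L^1$ and that $G_\infty$ is a stationary solution of \eqref{betranc}, i.e.,
\begin{equation*}
Q_{\eta_t}[G_\infty]=\nabla\cdot(G_\infty\A\bsp)=G_\infty\bigl(\A\bsp\cdot\nabla h(\bsp)+\tr\A\bigr).
\end{equation*}
The classical $H$-theorem for the hard-sphere kernel forces any $h$ with $Q_{\eta_t}[G_\infty]\equiv 0$ to be an affine combination of $1,\eta_t^{-1}\bsp,|\eta_t^{-1}\bsp|^2$; but $h$ is fixed in time while $\eta_t$ varies nontrivially (by an analysis analogous to Proposition~\ref{prop:star}), giving a contradiction. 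The delicate case is $Q_{\eta_t}[G_\infty]\not\equiv 0$, exactly balanced against the polynomial drift. Writing
\begin{equation*}
\frac{Q_{\eta_t}[G_\infty](\bsp)}{G_\infty(\bsp)}=\iint G_\infty(\bsp')\bigl(\exp(\Delta h)-1\bigr)\,[\nu\cdot\eta_t^{-1}(\bsp-\bsp')]_+\,\dd\bsp'\,\dd\nu
\end{equation*}
with $\Delta h=h(\bsp_*)+h(\bsp_*')-h(\bsp)-h(\bsp')$, positive homogeneity of $h$ of degree $r>0$ makes $\Delta h$ grow as $|\bsp|^r$ along rescalings unless $\Delta h\equiv 0$ on the hard-sphere scattering set; this last identity is the defining property of a collision invariant, which forces $h$ to be at most quadratic in $\eta_t^{-1}\bsp$, reducing to the previous contradiction.

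For the weak-$L^1$ convergence along a subsequence, mass conservation $\|G_t\|_{L^1}=1$ and the constant second moment $\int|\bsp|^2 G_t\,\dd\bsp=\tr T_{\eta_t}$ give tightness, so no mass escapes to infinity. To obtain uniform integrability I would use the entropy identity $\tfrac{d}{dt}\int G_t\log G_t\,\dd\bsp=-D[G_t]-\tr\A$, where $D[G_t]\geq 0$ is the Boltzmann entropy dissipation, together with the bound on $\tr\A$ coming from the preserved stress tensor, to obtain a uniform upper bound on $\int G_t\log^+ G_t\,\dd\bsp$. de la Vall\'ee--Poussin then yields uniform integrability, and Dunford--Pettis extracts a weakly convergent subsequence $G_{t_j}\rightharpoonup G_\infty$ in $L^1$, with $\|G_\infty\|_{L^1}=1$ guaranteed by tightness. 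I expect the rigidity step above to be the main technical obstacle, since it must merge the weak mode of convergence, the time divergence of $\eta_t$, and the algebraic structure of hard-sphere collision invariants in order to rule out non-quadratic positively homogeneous $h$.
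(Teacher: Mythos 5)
Your blind proof tracks the paper's structure for the soft parts, but there are two genuine gaps in the two harder parts.

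For global existence the paper does not truncate the drift at the $G$-level. It works at the level of $g$ and equation~\eqref{be}, where the transport term $\mu\,\nabla\cdot(g\,\bsalpha\otimes\bsbeta\,\bsw)$ generates the explicit flow semigroup $(X_t g)(\bsw)=g((\Id+\mu t\,\bsalpha\otimes\bsbeta)\bsw)$ on $L^1_2$; Povzner plus the Gr\"onwall bound $\theta[g_t]\le e^{|\mu|t}\theta[g_0]$ then yield a global mild solution which is transported back to $G$ (Proposition~\ref{prop:regB}). Your truncation scheme would probably work, but the explicit semigroup is cleaner and avoids the intermediate compactness. Your treatment of the collision invariants via $Q_{\eta_t}[G](\bsp)=\det\eta_t\,Q[g](\eta_t^{-1}\bsp)$ is exactly the paper's Lemma~\ref{lem:ci}.

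The first real gap is in ruling out exponential-form equilibria. You write that positive homogeneity ``makes $\Delta h$ grow as $|\bsp|^r$ along rescalings unless $\Delta h\equiv 0$, \dots\ which forces $h$ to be at most quadratic.'' This does not yet give a contradiction: if $\Delta h\le 0$ everywhere then $\exp(\Delta h)-1$ stays in $[-1,0]$ and $Q_{\eta_t}[G^{(h)}]/G^{(h)}$ has at most \emph{linear} growth in $|\bsp|$ coming from the kernel $[\nu\cdot\eta_t^{-1}(\bsp-\bsp')]_+$, which cannot be excluded by homogeneity alone. The paper's key step, which is missing from your argument, is to establish that $\Delta h$ must attain \emph{positive} values somewhere (so that $\exp(\Delta h)$ has genuinely exponential, not polynomial, growth). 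This is done by evaluating the stationary equation at $\bsp=0$ with $t=0$, using $\eta_0=\Id$ and the fact $\tr P_{\Id}=0$ in \eqref{eq:PId} together with \eqref{etaode} to get $\tr\A|_{t=0}=0$, which forces $Q[G^{(h)}](0)=0$. Since the integrand has a fixed-sign weight, this forces the exponent to change sign; exchanging $\bsp\leftrightarrow\bsp'$ then produces a sector on which $i(\nu)>0$, hence exponential growth of $Q_{\eta_t}[G^{(h)}]/G^{(h)}$, contradicting the polynomial right-hand side $\tr\A+\nabla h\cdot\A\bsp$. The residual quadratic case is killed separately by noting that $\A=-\mu\,\bsalpha\otimes\bsbeta$ when $\eta_t\equiv\Id$, which is incompatible with $\nabla\cdot(G^{(h)}\A\bsp)=0$.

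The second gap is in the weak-$L^1$ convergence. You propose to control $\int G_t\log^+G_t$ via the entropy identity $\frac{\dd}{\dd t}\int G_t\log G_t=-D[G_t]-\tr\A$, ``together with the bound on $\tr\A$ coming from the preserved stress tensor.'' This is exactly what the paper's Remark following Theorem~\ref{Boltz-thm} says cannot be done: $\A_{\mathrm{sym}}=-P$ via \eqref{etaode}, and $\tr P$ is not sign-definite and its long-time behaviour is not controlled, so the entropy is not established to be a Lyapunov functional (nor is $\tr\A$ shown to be time-integrable). The paper's own justification for the weak convergence claim is just tightness from the preserved mass and second moments of $G_t$, which is what you also observe but which, as you correctly suspect, does not by itself rule out concentration; the paper is terse here and does not supply the uniform-integrability argument you were reaching for.
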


The proof involves several parts. The collision invariants are determined in subsection \ref{sub41}, the rescaling $\eta_t$ is determined in subsection \ref{sseta}. The shape of universal equilibria are discussed in subsection \ref{ssnoMax}. The global bounds leading to tightness are given in subsection \ref{ss44}, which completes the proof.

\begin{remark}
The question whether \eqref{betranc} admits a Lyapunov functional appears to be open.
It is not hard to see, if
$$ S[G] = \int_{\R^\di} G\, \log G\, \dd \bsp,$$
that we have
\begin{equation} \label{ent-prod}
 \frac{\dd S}{\dd t} \leq -\frac{1}{4} \int_{\R^{4}}\int_{S^{1}}  \min_{s \in [0,1]}\frac{(GG'-G_*G_*')^2}{s\,GG' +(1-s)G_*G_*'}[\nu \cdot \eta_t^{-1}(\bsp - \bsp') ]_+\,\dd \nu\,\dd \bsp' \dd \bsp  -\tr \A
\end{equation}
However the behaviour of $\A$ , which will be linked to the stress rates $P$ in~\eqref{etaode} below, cannot be determined. Note that the first term in~\eqref{ent-prod} is analogous to the standard entropy production in the case where $\mu=0$. In particular, it is non-positive. However $\tr P$ is not necessarily negative and in contrast to Remark \ref{FP-ent} we cannot conclude that $S$ is Lyapunov functional.
\end{remark}

\begin{proof}
\begin{eqnarray*}
\frac{\dd S}{\dd t} &=& \int_{\R^\di} (1+\log G) \, \partial_t G\, \dd \bsp\\
&=& \int_{\R^\di} (1+\log G)(Q_{\eta_t} -\nabla\cdot(G \A\bsp))\, \dd \bsp = \int_{\R^\di} (Q_{\eta_t} -
\nabla\cdot(G \A\bsp))\, \log G\, \dd \bsp\\
&=&  \int_{\R^\di} Q_{\eta_t} \log G\,\dd \bsp + \int_{\R^\di} \frac{1}{G} \nabla_{\bsp} G \cdot \,G \A \bsp\, \dd \bsp\\
&=& \int_{\R^{4}}\int_{S^{d-1}} (G_* G_*' -G G')\,\log G\,
[\nu \cdot \eta_t^{-1}(\bsp - \bsp') ]_+\,\dd \nu\,\dd \bsp' \dd \bsp  -\tr \A\\
&=&\frac{1}{4} \int_{\R^{4}}\int_{S^{d-1}} (G_* G_*' -G G')\,(\log G\,+\log G' - \log G_* -\log G_*')\,
 [\nu \cdot \eta_t^{-1}(\bsp - \bsp') ]_+\,\dd \nu\,\dd \bsp' \dd \bsp  -\tr \A\\
&=& \frac{1}{4} \int_{\R^{4}}\int_{S^{d-1}} (G_* G_*' -G G')\,(\log GG' - \log G_*G_*')\,
 [\nu \cdot \eta_t^{-1}(\bsp - \bsp') ]_+\,\dd \nu\,\dd \bsp' \dd \bsp  -\tr \A\\
&\leq & -\frac{1}{4} \int_{\R^{4}}\int_{S^{d-1}}  \min_{s \in [0,1]}\frac{(GG'-G_*G_*')^2}{s\,GG' +(1-s)G_*G_*'}[\nu \cdot \eta_t^{-1}(\bsp - \bsp') ]_+\,\dd \nu\,\dd \bsp' \dd \bsp  -\tr \A,
\end{eqnarray*}
as required.
\end{proof}

\subsection{Collision invariants and stationary solutions}\label{sub41}
If we ignore $\tr \A$ in \eqref{ent-prod} it is well known that the numerator vanishes if $G$ depends only on collision invariants $k_{\eta_t}(\bsp)$ which are characterized by
$$ k+k' =k_*+k_*'.$$
We determine the collision invariants below, but $|\bsp|^2$ is not a collision invariant for general $\eta_t$.
Note that every collision invariant $k$ generates a stationary solution $G = \exp(k)$ for $Q_{\eta_t}$, but not in general for the full equation~\eqref{betranc}.

\begin{lemma}\label{lem:ci}
If $G$ is a zero of $Q_{\eta_t}$, i.e. $Q_{\eta_t}[G]=0$, then there exists $a,c\in \R$, $\bsb \in \R^\di$ such that
$$ G(\bsp) = \exp\(a+ \bsb \cdot \eta_t^{-1} \bsp + c\,|\eta_t^{-1} \bsp|^2\).$$
Let $k_{\eta_t}(\bsp) = \bsp \cdot K_{\eta_t}\bsp$ with $K_{\eta_t} = (\eta_t^*)^{-1} \eta_t^{-1}
\in \R^{\di\times \di}_{\mathrm{sym}}$.
Then $k$ is a quadratic collision invariant.
\end{lemma}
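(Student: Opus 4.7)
The plan is to reduce the problem to the classical characterization of zeros of the hard-sphere collision operator via the change of variables $\bsw = \eta_t^{-1}\bsp$, exploiting the intertwining $Q_{\eta_t}[G] = \det\eta_t\,Q[g]$ already recorded in the excerpt. First I would verify this intertwining directly: with $\bsp' = \eta_t\bsw'$ contributing a Jacobian and $\eta_t^{-1}(\bsp-\bsp') = \bsw - \bsw'$, the pre/post-collisional images $\bsp_*,\bsp_*'$ satisfy
\begin{equation*}
\eta_t^{-1}\bsp_* = \bsw - [\nu\cdot(\bsw-\bsw')]\,\nu, \qquad \eta_t^{-1}\bsp_*' = \bsw' + [\nu\cdot(\bsw-\bsw')]\,\nu,
\end{equation*}
which are exactly the standard elastic-collision images, and the angular factor $[\nu\cdot\eta_t^{-1}(\bsp-\bsp')]_+$ becomes $[\nu\cdot(\bsw-\bsw')]_+$. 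Hence $Q_{\eta_t}[G]\equiv 0$ if and only if $Q[g]\equiv 0$.

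Next, I would invoke the classical characterization of zeros of the hard-sphere collision operator (the standard local $H$-theorem argument, see e.g.\ Cercignani or \cite{VillHyp}): any measurable nonnegative $g$ with $g_*g_*' = gg'$ almost everywhere on the collision manifold has the form
\begin{equation*}
g(\bsw) = \exp\bigl(a' + \bsb\cdot\bsw + c\,|\bsw|^2\bigr)
\end{equation*}
for some $a'\in\R$, $\bsb\in\R^\di$, $c\in\R$. Substituting back gives $G(\bsp) = \det\eta_t\,g(\eta_t^{-1}\bsp) = \exp(a + \bsb\cdot\eta_t^{-1}\bsp + c\,|\eta_t^{-1}\bsp|^2)$ with $a := a' + \log\det\eta_t$, which is the claimed form.

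For the second assertion, I would verify directly that $k_{\eta_t}(\bsp) = \bsp\cdot K_{\eta_t}\bsp = |\eta_t^{-1}\bsp|^2$ satisfies the collision-invariance relation $k_{\eta_t}(\bsp) + k_{\eta_t}(\bsp') = k_{\eta_t}(\bsp_*) + k_{\eta_t}(\bsp_*')$. Under the substitution $\bsw = \eta_t^{-1}\bsp$ this reduces to $|\bsw|^2 + |\bsw'|^2 = |\bsw_*|^2 + |\bsw_*'|^2$, i.e.\ the usual conservation of kinetic energy in an elastic collision, which is immediate from the formulas above. The invariants $1$ and $\eta_t^{-1}\bsp$ are handled analogously from the standard invariants $1$ and $\bsw$.

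The main obstacle is the appeal to the classical characterization of Maxwellians as zeros of the Boltzmann operator: one must ensure the hypotheses under which one applies this result (measurability, possibly mild integrability to exclude pathological solutions of the multiplicative functional equation $gg' = g_*g_*'$) are compatible with what we know about $G$. Apart from matching these hypotheses, the argument is a clean change of variables.
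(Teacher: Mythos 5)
Your proposal is correct and takes essentially the same route as the paper: both proofs reduce to the classical characterization of zeros of the unrescaled hard-sphere collision operator by the conjugation relation (the paper states $Q_{\eta_t}[G](\bsp) = Q[g](\eta_t^{-1}\bsp)$ with $g(\bsw)\propto G(\eta_t\bsw)$, citing Cercignani--Illner--Pulvirenti Sec.~3.2), and then transport the result back through $\eta_t$. Your extra steps -- spelling out the change of variables on the collision manifold and directly checking conservation of $k_{\eta_t}$ -- are sound bookkeeping but do not constitute a genuinely different argument.
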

\begin{proof}
Recall that
\begin{equation} \label{Qtransf}
Q_{\eta_t}[G](\bsp) = Q[g](\eta_t^{-1} \bsp),
\end{equation}
where $G(\bsp)= \det\eta_t^{-1} g(\eta_t^{-1} \bsp)$.
It is well known (e.g. \cite{CIP94}, Sec. 3.2) that $Q[g] = 0$ if and only if $g = \exp(k(\bsp))$
where $k$ is a collision invariant, i.e.
$$ k(\bsp) = a+\bsb\cdot \bsw +c|\bsw|^2.$$
Thus, $Q_{\eta_t}[G]=0$ if and only if $G = a+\bsb\cdot \eta_t^{-1}\bsp + c|\eta_t^{-1}\bsp|^2$, which is the claim.
\end{proof}

\subsection{Choice of rescaling}\label{sseta}
It is not hard to see that collisions do not conserve
standard kinetic energy of $\bsp$ and $\bsp'$. Indeed
\begin{eqnarray} \nonumber
|\bsp_*|^2+|\bsp_*'|^2 & =&
|\bsp -\eta_t\nu \otimes \nu \eta_t^{-1}(\bsp-\bsp')|^2 +
|\bsp' +\eta_t\nu \otimes \nu \eta_t^{-1}(\bsp-\bsp')|^2\\
\nonumber
&=&|\bsp|^2+|\bsp'|^2-2(\bsp-\bsp')\cdot \eta_t\nu \otimes \nu
\eta_t^{-1}(\bsp-\bsp')+2(\bsp-\bsp')\cdot\eta_t^{-1} \nu \otimes \nu
\eta_t^2 \nu \otimes \nu \eta_t^{-1} (\bsp -\bsp')\\
&=& |\bsp|^2 + |\bsp'|^2 + (\bsp-\bsp')\cdot C_\nu (\bsp-\bsp'), \label{endef}
\end{eqnarray}
where
$$ C_\nu = [(\nu \cdot \eta_t^2\nu) \eta_t^{-1}\nu \otimes \nu \eta_t^{-1}
-\eta_t\nu \otimes \nu \eta_t^{-1}]_\mathrm{sym}.$$


The stress rates are given by
$$ P_{\eta_t} = \frac{1}{2}\int_{\R^\di} \bsp\otimes \bsp \, Q_{\eta_t} \, \dd \bsp,$$
so that
\begin{eqnarray}
\nonumber
\tr P_{\eta_t} &=& \frac{1}{2} \int_{\R^{4}} \int_{S^{1}}|\bsp|^2\, (G_* G_*' -G G')\,[\nu \cdot \eta_t^{-1}(p-p')]_+ \dd \nu\,\dd \bsp'\, \dd \bsp \\
\nonumber
&=& \frac{1}{4} \int_{\R^{4}} \int_{S^{1}}(|\bsp|^2+|\bsp'|^2)\, (G_* G_*' -G G')\,[\nu \cdot \eta_t^{-1}(p-p')]_+ \dd \nu\,\dd \bsp'\, \dd \bsp\\
\nonumber
&=& \frac{1}{4} \int_{\R^{4}} \int_{S^{1}}(|\bsp_*|^2+|\bsp_*'|^2)\, G G'\,
[\nu \cdot \eta_t^{-1}(p-p')]_+ \dd \nu\,\dd \bsp_*'\, \dd \bsp_*\\
\nonumber
&&  - \frac{1}{4} \int_{\R^{4}} \int_{S^{1}}(|\bsp|^2+|\bsp'|^2) \, G G'\,[\nu \cdot \eta_t^{-1}(p-p')]_+ \dd \nu\,\dd \bsp'\, \dd \bsp\\
\label{trP}
&=& \frac{1}{4}\int_{\R^{4}} \int_{S^{1}}(\bsp - \bsp') \cdot C_\nu \,(\bsp-\bsp') \, G G' \,
[\nu \cdot \eta_t^{-1}(p-p')]_+ \dd \nu\,\dd \bsp'\, \dd \bsp.
\end{eqnarray}
The first equation holds because the order of integration can be exchanged, the last equation follows from~\eqref{endef}.
In particular one finds that
\begin{equation}\label{eq:PId}
 \tr P_\Id = 0.
\end{equation}
Our aim is to construct a time-dependent transformation $\eta_t\in
\R^{d\times d}_\mathrm{sym}$ such that the renormalized
Cauchy stress tensor
$$ T_{\eta_t} =\frac{1}{2} \int_{\R^\di} \bsp \otimes \bsp \, G(\bsp)\, \dd \bsp $$
is constant. We have already seen in section~\ref{sec:FP} that $T_{\eta_t} =  \Id$ if
$$ \eta_t^{-2} = \frac{1}{2}\int \bsw \otimes \bsw \,g_t(\bsw)\, \dd \bsw,$$
and $g_t$ is a solution of \eqref{be}.
Differentiating the Cauchy stress with respect to $t$ and using $T_{\eta_t} = \frac{1}{2} \Id$ gives
\begin{eqnarray*}
\frac{\dd T_{\eta_t}}{\dd t} &=& \frac{1}{2}\int_{\R^\di} \bsp \otimes \bsp \,(Q_{\eta_t} -\nabla\cdot(G \A \bsp))\, \dd \bsp
= 
P+ \frac{1}{2}(\A+\A^*) = P+\A_\mathrm{sym}.
\end{eqnarray*}
Thus, we have obtained a non-autonomous system of ordinary differential equations
\begin{equation} \label{etaode}
P + \A_\mathrm{sym}=0.
\end{equation}
Obviously \eqref{etaode} is the analogue of equations \eqref{resmeq2}. 

\subsection{Are there stationary solutions that are of exponential form?}\label{ssnoMax}
Assume that $G$ is of the exponential form , i.e. there exist $h \in C^1(\R^\di\setminus\{0\})$ and homogeneity exponent $r>0$
$$ G^{(h)}(\bsp) = K\exp(h(\bsp)), \quad \mbox{ with } h(\lambda \bsp)=\lambda^r h(\bsp) \quad  \forall \lambda >0, \forall \bsp \in \R^\di.$$
Note that by integrability of $G$ this implies $h <0$ on $\R^\di$. Now $G$ is substituted into the differential equation \eqref{betranc}. Note first that
\begin{equation}\label{eq:maxrhs}
\nabla_\bsp \cdot (G^{(h)}(\bsp) \cdot \A\bsp) = (\tr \A + \nabla h \cdot \A\bsp)\, G^{(h)}(\bsp)
=k_t(\bsp)\, G(\bsp)
\end{equation}
where $k$ is the sum of a constant and homogeneous function of order $r$ in $\bsp$.

Furthermore in the collision term we denote
$$ \bsp_*=\bsp- \eta_t \nu \otimes \nu \eta_t^{-1} (\bsp-\bsp'), \quad
\bsp_*'=\bsp'+\eta_t \nu \otimes \nu \eta_t^{-1}(\bsp-\bsp'),$$
then it has the form
\begin{eqnarray*}
&& Q_{\eta_t}[G](\bsp)\\&=& K^2\int_{\R^\di} \int_{S^{1}}\Bigl\{ \exp\(h(\bsp_*) + h(\bsp_*')\)-\exp\( h(\bsp)+h(\bsp')\)
\Bigr\}[\nu\cdot \eta_t^{-1}(\bsp-\bsp') ]_+\, \dd \nu \, \dd \bsp' \nonumber\\
&=&K^2e^{h(\bsp)} \,  \int_{\R^\di}\int_{S^{1}} \left[\exp\(h(\bsp_*) + h(\bsp_*')-h(\bsp)-h(\bsp')\) -1\right]
\exp\(h(\bsp')\)\,[\nu\cdot \eta_t^{-1}(\bsp-\bsp') ]_+\, \dd \nu \, \dd \bsp'\nonumber\\
&=&K^2e^{h(\bsp)} \,  \int_{\R^\di} e^{h(\bsp-\bsq)}\int_{S^{1}} \nonumber\\&& \underbrace{\left(
\exp\(h(\bsp- \eta_t \nu \otimes \nu \eta_t^{-1} \bsq) + h(\bsp-\bsq+\eta_t \nu \otimes \nu \eta_t^{-1}\bsq)-h(\bsp)-h(\bsp-\bsq)\)-1\right)\,[\nu\cdot \eta_t^{-1} \bsq]_+}_{=j(\bsq)}\, \dd \nu \, \dd \bsq.
\end{eqnarray*}
To cancel the expression in \eqref{eq:maxrhs}, the integrals over $j$ are necessarily $O(|\bsp|^r)$ as $|\bsp| \to \infty$. First we consider the case when $\bsq=\bsp$.
Then  the  exponent can be simplified to
$$i(\nu):=h(\bsp -\eta_t \nu \otimes \nu \eta_t^{-1} \bsp) + h(\eta_t \nu \otimes \nu \eta_t^{-1}\bsp)-h(\bsp) -h(0)$$
If there are $\nu$ and  $\bsp$ such that
\begin{equation}\label{eq:hsumpos}
i(\nu)>0,
\end{equation}
then $j(\bsq)$ grows exponentially for a sector of $\bsq$ in $\R^\di$. Due to continuous dependence of this sector on $\nu$,   the integral
$\int_{S^{1}}  j(\bsq)\, \dd \nu$ still grows exponentially on some sector in $\R^\di$. By choosing $\bsp$ in such a sector, we obtain constants $c_1,c_2$ such that
$$ Q_{\eta_t}[G](\bsp) \geq (c_1 j(\bsp) -c_2)G(\bsp),$$
which cannot equal to a term $k_t(\bsp) G(\bsp)$ as in \eqref{eq:maxrhs}, where $k$ is bounded by a polynomial.

We now establish the existence of some $\nu$ such that \eqref{eq:hsumpos} holds. For $t=0$ we have $\eta_t=\Id$ and by \eqref{eq:PId} and \eqref{etaode} $\tr \A =0$, then  equations~\eqref{betranc} and \eqref{eq:maxrhs} imply for $\bsp =0$ that
\begin{align*} 
0&=Q_{\eta_t}[G^{(h)}]=Q[G^{(h)}]\\
&=K^2 \,  \int_{\R^\di} e^{h(-\bsq)} \int_{S^{1}}\left(
\exp\(h(- \nu \otimes \nu \bsq) + h(-\bsq+\nu \otimes \nu \bsq)-h(-\bsq)\)-1\right)\,[\nu\cdot \bsq]_+\, \dd \nu \, \dd \bsq.
\end{align*}
Unless $r=2$, when $h$ is a collision invariant, this implies that the exponent attains positive and negative values. By exchanging the roles of $\bsp$ and $\bsp'$, we hence obtain
that \eqref{eq:hsumpos} will hold for some $\bsp$ and $\nu$, hence ruling out any $h$ with homogeneity exponent $r>0$, $r \neq 2$.

Now consider the only remaining case $h(\bsp)= -d |\eta_t^{-1} \bsp|^2$ for some $d>0$.
The collision invariance of $h$  implies $Q_{\eta_t}[G^{(h)}]\equiv 0$. Furthermore $\eta_t=\Id$ is constant as $G^{(h)}$ is constant, i.e. $\partial_t G^{(h)}=0$.
Hence we obtain the equation \[\nabla_\bsp \cdot (G^{(h)}(\bsp) \cdot \A\bsp) =(\tr \A +2 d \bsp \cdot F \bsp)G^{(h)} =0.\] For $\eta_t=\Id$, then  $\A= -\mu \bsalpha \otimes \bsbeta$, which is incompatible with   $\tr \A -2 d \bsp \cdot \A \bsp=-2 d \bsp \cdot \A \bsp=0$, thus ruling out the collision invariant.

\subsection{Completing the proof of Theorem \ref{Boltz-thm}}\label{ss44}
The regularity of the solution follows from the regularity proposition below.

\begin{proposition} \label{prop:regB} If $\int_{\R^\di} g_0(\bsw) (1+|\bsw|^2)\, \dd \bsw < \infty$,
then \eqref{be} admits a unique mild solution for all $t>0$.
\end{proposition}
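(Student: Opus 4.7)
I would follow the two-step strategy already used in Proposition~\ref{prop:solFP}. First, truncate the unbounded velocity in the drift by replacing $\bsw$ with $\chi_n(\bsw)$, obtaining a globally Lipschitz bounded perturbation of the spatially homogeneous hard-sphere Boltzmann equation. For each $n$, classical theory for the homogeneous hard-sphere equation in $L^1_2$ (Arkeryd, Mischler--Wennberg) combined with the fact that the extra drift is a bounded linear perturbation in divergence form produces a unique nonnegative mild solution $g^n \in C^0([0,T]; L^1_2(\R^\di))$ with conserved mass.

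Second, derive uniform-in-$n$ bounds that allow a passage to the limit. Mass is preserved since $\int Q[g^n]\,\dd \bsw = 0$ and the truncated drift is a pure divergence. For the kinetic energy the collision operator again contributes nothing (hard-sphere collisions conserve $|\bsw|^2$), while the truncated drift gives
\[
\frac{\dd \theta[g^n]}{\dd t} = -\mu \int_{\R^\di} (\bsalpha \cdot \bsw)(\bsbeta \cdot \chi_n(\bsw))\, g^n \,\dd \bsw \;\leq\; 2|\mu|\, \theta[g^n],
\]
by Cauchy--Schwarz and $|\chi_n(\bsw)| \leq |\bsw|$, so Gronwall yields $\theta[g^n](t) \leq \theta[g_0]\, e^{2|\mu|t}$ uniformly in $n$. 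Together with mass conservation this gives uniform $L^1_2$-tightness on bounded time intervals, which is the key ingredient for compactness.

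The passage to the limit and uniqueness are then standard. Weak compactness in $L^1_2$ plus stability of the hard-sphere collision operator along weakly convergent tight sequences -- propagating moments $L^1_s$ with $s>2$ by a Povzner inequality to obtain the equi-integrability needed for the quadratic gain term -- deliver a mild solution $g$ of \eqref{be}. Uniqueness follows from a Gronwall estimate on $\| g - h \|_{L^1_2}$, combining the classical Lipschitz bound for $Q$ on $L^1_2$-balls with the linearity and boundedness of the drift action on the moment. The main obstacle is precisely the limit passage in the nonlinear collision term, since weak $L^1$ convergence does not suffice; this is resolved by the Povzner-based moment propagation. The shear drift does not interfere with this classical scheme because the orthogonality $\bsalpha \perp \bsbeta$ keeps the drift's contribution to every moment of the same order as that moment itself.
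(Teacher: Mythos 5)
The central flaw is the claim that the truncated drift is a ``bounded linear perturbation'' of the homogeneous Boltzmann dynamics. The operator $g\mapsto\mu\,\nabla_\bsw\cdot(g\,\bsalpha\otimes\bsbeta\,\chi_n(\bsw))$ still contains $\nabla_\bsw g$, so it is an unbounded operator on $L^1_2(\R^\di)$ regardless of the truncation of the coefficient. In Proposition~\ref{prop:solFP}, which you are modelling this on, truncation works because the equation contains $\Delta_\bsw$ and the truncated drift can be absorbed as a Miyadera perturbation of the heat semigroup, using parabolic smoothing. The Boltzmann equation \eqref{be} has no diffusive term in $\bsw$, so there is nothing to absorb the derivative into, and the drift cannot be treated as a perturbation of the collisional dynamics at all. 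The subsequent compactness machinery (weak $L^1$ limits, Povzner propagation of higher moments for equi-integrability of the gain term) is also heavier than what the problem requires, and would need the data to have moments beyond $L^1_2$ to get equi-integrability uniformly, which is not assumed.

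The paper's proof goes the opposite way and does not truncate: the full shear drift $\mu\,\nabla\cdot(g\,\bsalpha\otimes\bsbeta\,\bsw)$ by itself generates an explicit strongly continuous semigroup on $L^1_2(\R^\di)$, namely $(X_tg)(\bsw)=g((\Id+\mu t\,\bsalpha\otimes\bsbeta)\bsw)$, obtained from the linear shear characteristics. One then writes the Duhamel formula
\begin{equation*}
g_t = X_t g_0 + \int_0^t X_{t-s}\,Q(g_s,g_s)\,\dd s
\end{equation*}
and runs a Picard fixed point, using Povzner's inequality to make $Q$ a locally Lipschitz map on $L^1_2$. Global extension is obtained exactly as in your a priori energy estimate: mass is conserved, $|\bsw|^2$ is a collision invariant so $Q$ does not contribute, and the drift term yields $\dot\theta\le|\mu|\theta$ by Gronwall. (Your constant $2|\mu|$ is not optimal --- use $\bsalpha\perp\bsbeta$ to get $|(\bsalpha\cdot\bsw)(\bsbeta\cdot\bsw)|\le\tfrac12|\bsw|^2$ --- but this does not affect the conclusion.) If you insist on keeping the truncation you would have to let the truncated drift generate its own characteristic semigroup, at which point truncation buys you nothing over using the exact transport semigroup $X_t$ directly.
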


\begin{proof}
The transport term $\mu\, v\,\partial_u g$ is the generator of the strongly continuous semigroup
$X_t: L^1_2(\R^\di) \to L^1_2(\R^\di)$ on the space of integrable function that satisfy
$\int_{\R^\di} g_0(\bsw) (1+|\bsw|^2)\, \dd \bsw < \infty$. The semigroup is given explicitly by
$(X_tg)(\bsw) =g((\Id + \mu\,t\,\bsalpha \otimes \bsbeta) \bsw)$. Furthermore by Povzner's inequality $Q$ is a continuous nonlinear operator
on $L^1_2(\R^\di)$. This gives the existence of the unique mild solution to \eqref{be} given by
\begin{equation}\label{eq:mild}
 g_t = X_t g_0 + \int_0^t X_{t-s} Q(g_s,g_s)\dd s.
\end{equation}
We show that we can continue this solution globally by showing that  $\int_{\R^\di} g_t(\bsw) (1+|\bsw|^2)\, \dd \bsw < \infty$ for all times.
First recall that
\begin{equation} \label{colinv}
\int_{\R^\di} |\bsw|^2 \, Q[g]\, \dd \bsw = 0,
\end{equation}
hold for any density $g$ with $\theta[g] = \frac{1}{2}\int |\bsw|^2\, \dd \bsw <\infty$ because $|\bsw|^2$ is a collision invariant.
Differentiating $\theta$ with respect to $t$ gives
\begin{eqnarray*}
\frac{\dd \theta}{\dd t} &=& \frac{1}{2}\int_{\R^\di} |\bsw|^2( \mu\,\nabla\cdot(g \,\bsalpha \otimes \bsbeta \bsw)+ Q[g])\, \dd \bsw\\
&=&  \frac{1}{2} \underbrace{\int_{\R^\di} |\bsw|^2 \, Q[g]\, \dd \bsw}_{=0 \text{ by \eqref{colinv}}} -\mu \int_{\R^\di} g\, \bsw \cdot \bsalpha \otimes \bsbeta \bsw\,  \dd \bsw\\
&\leq& \frac{\mu}{2}\int_{\R^\di} |\bsw|^2\, g\, \dd \bsw = |\mu| \, \theta[g].
\end{eqnarray*}
Thus, $\theta[g] \leq e^{|\mu| t}\, \theta[g_0]$. By a similar argument, $\int_{\R^\di} g_t(\bsw) \, \dd \bsw =\int_{\R^\di} g_0(\bsw) \, \dd \bsw $,
so $\int_{\R^\di} g_t(\bsw) (1+|\bsw|^2)\, \dd \bsw$ remains bounded for bounded times, such that \eqref{eq:mild} defines global mild solution, which is unique by a Gronwall argument.
\end{proof}

Then  we  transform the mild solution $g$  as in the paragraph preceding \eqref{betranc} to obtain a global solution $G$  of \eqref{betranc}.

The choice of $\eta_t$ in subsection \ref{sseta} give the preservation of  mass and energy for $G_t$,
this immediately gives the weak convergence of subsequences to some limit points. The collision invariants are characterised in lemma \ref{lem:ci}. The shape of possible equilibrium
is analysed in subsection \ref{ssnoMax}.

\section{Conclusion}
We studied two closely related equations in kinetic theory, the Fokker-Planck equation and the Boltzmann equation
with shear boundary conditions. The boundary conditions are not compatible with the conservation of energy. After rescaling the velocities in an anisotropic fashion we obtain renormalized equations which have the property that
solutions conserve all second moments, and in particular the energy. The renormalized Fokker-Planck equation admits Maxwellian equilibria and the long-time behaviour of renormalized solutions can be characterized completely. More precisely, we show rigorously that as $t$ tends to infinity solutions converge at an algebraic rate to the Maxwellian with the appropriate second moments.

On the other hand the renormalized Boltzmann equation does not admit equilibria of exponential type including Maxwellians. Indeed, due to the non-autonomous nature of the shape equation \eqref{betranc} there might be no equilibria at all. We conjecture that for large time solutions of the renormalized Boltzmann converge to a limiting density, but a rigorous proof is not available.

Results on the existence of self-similar profiles (i.e. equilibria for the shape equation) and long-time behaviour in the case of soft interaction
potentials
have been obtained in \cite{JNV17} and \cite{JNV18}. In \cite{JNV17} the existence of stationary self-similar solutions is established rigorously for Maxwellian molecules (where the repulsive force between particles at distance $r$ is $r^{-5}$) after isotropic rescaling (where $\eta$ is a multiple of the identity). Detailed information about energy flux can then be derived.
Moreover, in \cite{JNV18} formal calculations covering the supercritical case where the force decays faster than $r^{-5}$ are being presented. Based on these calculations the authors conjecture that after isotropic rescaling in the supercritical case solutions converge to a Maxwellian.

It is noteworthy that the analysis in \cite{JNV17} and \cite{JNV18} also covers other objectivity conditions than the
simple shear, for example homogeneous dilations where $S=(\Id, -\Id)$, and other choices. In view of these results it will be worthwhile to extend our approach to other objectivity conditions and explore different choices for the collision operator in the Boltzmann equation.




\section{Appendix: Proofs of Propositions~\ref{fpcons} and Remark~\ref{FP-ent} }
\begin{proof}[Proof of Proposition~\ref{fpcons}]
Assume that $h(\bsw)= a + \bsb \cdot \bsw$ is affine. Observe that \eqref{shear} implies for each $\bsw$ that
\begin{eqnarray*}
&&\nabla_\bsz f(x\,    \bsalpha   ,\bsw)\cdot\bsalpha =0\\
&& \nabla_\bsz f \cdot \bsbeta = - \mu \nabla_{\bsw} f\cdot \bsalpha
\end{eqnarray*}
Consider now the quantity
\begin{eqnarray*}
H&=&  \int_{\R^\di} h(\bsw)\, \partial_t f(x\,   \bsalpha   , \bsw)\,\dd \bsw\\
&=& \int_{\R^\di} h(\bsw)\, \(\Delta_\bsw f + \rho\,\theta^{-1}\,\nabla_\bsw\cdot(f(x\,   \bsalpha   ,\bsw)\,(\bsw-\rho^{-1}\bar\bsv))-\nabla_\bsz f(x\,   \bsalpha   ,\bsw)\cdot\bsw\)\,\dd \bsw\\ 
&=& \int_{\R^\di}\left\{\(\Delta_\bsw h -\rho\,\theta^{-1}\nabla h\cdot (\bsw-\rho^{-1} \bar\bsv)\)\, f(x\,   \bsalpha   ,\bsw) + \mu \,(\nabla_{\bsw} f \cdot \bsalpha) \, (\bsw\cdot \bsbeta) \right\}\,\dd \bsw
\end{eqnarray*}
Clearly $\Delta h=0$ as $h$ is affine. Moreover $\int_{\R^\di} \bsb \cdot (\bsw-\rho^{-1}   \bar\bsv  )\, f(\bsw)\,\dd \bsw=0$ by the definition of    $\bar\bsv$    and finally
$ \int_{\R^\di}(\nabla_\bsw f(x\,   \bsalpha   ,\bsw)\cdot \bsalpha)\,(\bsw\cdot \bsbeta)\,\dd \bsw= 0$ by partial integration.
This implies that
$$ t \mapsto \int_{\R^\di} h(\bsw) \, f_t(x\,\bsa,\bsw)\, \dd \bsw  \text{ is  constant},$$
for all $x\in \R$ and by  \eqref{shear} this is constant and thereby the first claim.

Next, assume that $\mu =0$ and $h(\bsw) = \frac{1}{2}|\bsw|^2$. Repeating the previous calculation we obtain
\begin{eqnarray*}
H&=& \int_{\R^\di} h(\bsw)\, \partial_t f(x\,   \bsalpha   , \bsw)\,\dd \bsw \\
&=&\int_{\R^\di} (2- \theta^{-1}|\bsw|^2)\,f(x\,   \bsalpha   , \bsw)\,\dd \bsw = \int_0^1 \(2\,\rho(x\,\alpha)-2\,\theta^{-1}\rho\,\theta(x\,\alpha)\)\,\dd \bsw=0.
\end{eqnarray*}
Finally we demonstrate that $f^M$ is a stationary solution. One finds that
\[
\rho = -\frac{\pi}{c}\, \exp\(a- \frac{|\bsb|^2}{4c}\), \quad
\bsw_0 = \frac{\pi}{2 c^2}\,{\exp\(a- \frac{|\bsb|^2}{4c}\)} \bsb,\quad
\theta = \frac{\pi}{2 c^2}\,{\exp\(a- \frac{|\bsb|^2}{4c}\)},
\]
in particular $\rho\, \theta^{-1} = -2c$ and $\rho^{-1} \bsw_0=-\frac{1}{2c}\bsb$. Then
\begin{eqnarray*}
L f^M&=&\Delta_\bsw f^M + \rho\,\theta^{-1}\,\nabla_\bsw\cdot(f^M\,(\bsw-\rho^{-1}\bsw_0))-\nabla_\bsz f^M\cdot\bsw\\
&=& \(|\nabla h|^2+\Delta h-4c-2c\,\nabla h(\bsw)\cdot \(\bsw+\frac{1}{2c}\bsb\)\)\,f^M\\
&=& \(4c^2 \,|\bsw|^2+ |\bsb|^2+4c \,\bsb \cdot \bsw+4c- 4c -2c\, (2c\,\bsw+\bsb)\cdot\(\bsw +\frac{1}{2c}\bsb\)\)\,f^M\\
&=& \(\(4c^2 -4c^2\) |\bsw|^2 + \(4c\,\bsb-2c\,(\bsb+\bsb)\)\cdot\bsw
+|\bsb|^2-|\bsb|^2\)\,f^M=0.
\end{eqnarray*}

To prove convergence for general spatially homogeneous initial datum $f_0$, we rewrite the equation in a spirit similar to subsection~\ref{sec:shape} and equation~\eqref{eq:u}. Using that mass $\rho$, momentum $\bsw_0$ and energy $\theta$
remain constant along solutions, choose $f^M$ such that its triple  $\rho, \bsw_0, \theta$ coincide with the of $f_0$. Then write $f=u f^M$ with $u \in L^2(\R^\di, \dd f^M)$. To show $L^1$ convergence it is enough to show that $u \to 0$ in $L^2(\R^\di, \dd f^M)/ \mbox{span}(1)$ by H\"{o}lder's inequality. The relative profile $u$
satisfies the equation
\begin{equation*} 
  \partial_t u =\Delta u + (\bsb+2c \bsw) \cdot \nabla u= -A^*A u,
\end{equation*}
where $A=\nabla u$ and $A^* .= - \nabla . - (\bsb+2c \bsw) .$ is its adjoint operator in $L^2(\R^\di, \dd f^M)/ \mbox{span}(1)$ with inner product $ \langle .,.\rangle$. Then
 \begin{equation*}
  \partial_t \langle u, u \rangle =2\langle \partial_t u, u \rangle = -2 \langle A u,Au \rangle  \leq -2 C \langle  u,u \rangle
\end{equation*}
using a Poincar\'{e} inequality as in \cite[A.19]{VillHyp}, which then gives the required exponential convergence of $u$ to $0$ in $L^2(\R^\di, \dd f^M)/ \mbox{span}(1)$.

\end{proof}

\begin{proof}[Proof of Remark~\ref{FP-ent}]
We calculate $\frac{\dd}{\dd t} S[G]$ along solutions of \eqref{eq:shape}, noticing that $G$ is smooth with respect to $\bsp$ as $g$ is smooth for $t>0$, so we can perform integration by parts etc.
\begin{align*}
   &\frac{\dd}{\dd t} S[G_t]\\=&\frac{\dd}{\dd t} \int_{\R^\di} G_t(\bsp) \ln \frac{G_t(\bsp)}{\exp(-|\bsp|^2/2)} \,\dd \bsp=
   \int_{\R^\di} \(\ln \frac{G_t(\bsp)}{\exp(-|\bsp|^2/2)}+1\) \partial_t G_t(\bsp) \dd \bsp\\
   \stackrel{\eqref{eq:shape}}{=}&\int_{\R^\di} \(\ln \frac{G_t(\bsp)}{\exp(-|\bsp|^2/2)}+1\) \nabla_\bsp \cdot\(G_t(\bsp)\( \theta^{-1} \Id - \A\)\bsp + T^{-1} \nabla G_t(\bsp)\) \dd \bsp\\
    \stackrel{\mbox{ibp}}{=}&-\int_{\R^\di} \nabla \(\ln \frac{G_t(\bsp)}{\exp(-|\bsp|^2/2)}+1\) \cdot \left(G_t(\bsp)\( \theta^{-1}\Id - \A\)\bsp + T^{-1} \nabla G_t(\bsp)\right)\, \dd \bsp\\
    =&-\int_{\R^\di} \frac{1}{G_t(\bsp)}\left( \nabla G_t(\bsp) + G_t(\bsp) \bsp \right)\cdot \left(G_t(\bsp)\(\theta^{-1} \Id - \A\)\bsp + T^{-1} \nabla G_t(\bsp)\right) \dd \bsp
\end{align*}
Next we split $\A$ into its symmetric  and its anti-symmetric part, which are given by $\frac 1 2 (\A+\A^*)$ and $\frac 1 2 (\A-\A^*)$  respectively. For the symmetric part we use \eqref{resmeq2} and
find that
\begin{align*}
&-\int_{\R^\di} \frac{1}{G_t(\bsp)}\left( \nabla G_t(\bsp) + G_t(\bsp) \bsp \right)\cdot \left(G_t(\bsp)\(\theta^{-1} \Id - \A\)\bsp + T^{-1} \nabla G_t(\bsp)\right)\, \dd \bsp\\
    =&-\int_{\R^\di} \frac{1}{G_t(\bsp)}\left( \nabla G_t(\bsp)+ G_t(\bsp)\bsp \right)\cdot T^{-1} \left(G_t(\bsp) \bsp +  \nabla G_t(\bsp)\right) \,\dd \bsp\\
    &+\frac{1}{2}\int_{\R^\di} \frac{1}{G_t(\bsp)}\left( \nabla G_t(\bsp) +  G_t(\bsp) \bsp \right)\cdot G_t(\bsp)\,(\A-\A^*)\bsp\, \dd \bsp\\
    =&-\int_{\R^\di} \frac{1}{G_t(\bsp)}\left|\eta_t\left(G_t(\bsp) \bsp +  \nabla G_t(\bsp)\right)\right|^2\, \dd \bsp+ \frac{1}{2}\int_{\R^\di}\nabla G(\bsp) \cdot (\A-\A^*)\bsp\,\dd \bsp\\
    &\quad +\frac{1}{2}\int_{\R^\di} \bsp \cdot (\A-\A^*)\bsp\, G_t(\bsp)\,  \dd \bsp \\
    =&-\int_{\R^\di} \frac{1}{G_t(\bsp)}\left|\eta_t\left(G_t(\bsp) \bsp +  \nabla G_t(\bsp)\right)\right|^2 \,\dd \bsp,
\end{align*}
the other two integrals are zero, the middle one by integration by parts and the final one due to the anti-symmetry of $\A-\A^*$.

Hence $S[G_t]$ decays unless
$G_t(\bsp) \bsp +  \nabla G_t(\bsp)=0$, the only differentiable solution in $L^1(\R^\di)$ are multiples of the Maxwellian $G^M$.
\end{proof}

\bibliographystyle{plain}
\bibliography{omd}

\end{document}